\newif\ifpdf
\numberwithin{equation}{section} \swapnumbers
\newtheorem{satz}{Satz}[section]
\newtheorem{theorem}[satz]{Theorem}
\newtheorem{proposition}[satz]{Proposition}
\newtheorem{lemma}[satz]{Lemma}
\newtheorem{definition}[satz]{Definition}
\newtheorem{remark}[satz]{Remark}
\begin{document}

\hyphenation{do-mi-na-ted sa-tis-fy-ing cor-res-pon-ding Wel-fen-gar-ten}

\title[Integration with respect to an infinite dimensional L\'{e}vy process]{The It\^{o} integral with respect to an infinite dimensional L\'{e}vy process: A series approach}
\author{Stefan Tappe}
\address{Leibniz Universit\"{a}t Hannover, Institut f\"{u}r Mathematische Stochastik, Welfengarten 1, 30167 Hannover, Germany}
\email{tappe@stochastik.uni-hannover.de}
\thanks{The author is grateful to an anonymous referee for valuable comments and suggestions.}
\begin{abstract}
We present an alternative construction of the infinite dimensional It\^{o} integral with respect to a Hilbert space valued L\'{e}vy process. This approach is based on the well-known theory of real-valued stochastic integration, and the respective It\^{o} integral is given by a series of It\^{o} integrals with respect to standard L\'{e}vy processes. We also prove that this stochastic integral coincides with the It\^{o} integral that has been developed in the literature.
\end{abstract}
\keywords{It\^{o} integral, infinite dimensional L\'{e}vy process, covariance operator, Hilbert-Schmidt operator}
\subjclass[2010]{60H05, 60G51}
\maketitle

\section{Introduction}

The It\^{o} integral with respect to an infinite dimensional Wiener process has been developed in \cite{Da_Prato, Prevot-Roeckner, Atma-book}, and for the more general case of an infinite dimensional square-integrable martingale it has been defined in \cite{Metivier, P-Z-book}. In these references, one first constructs the It\^{o} integral for elementary processes, and then extends it via the It\^{o} isometry to a larger space, in which the space of elementary processes is dense.

For stochastic integrals with respect to a Wiener process, series expansions of the It\^{o} integral have been considered, e.g., in \cite{Hitsuda, Onno-Wiener, carteh}. Moreover, in the article \cite{van-Neerven-2005}, series expansions have been used in order to define the It\^{o} integral with respect to a Wiener process for deterministic integrands with values in a Banach space. Later, in \cite{van-Neerven} this theory has been extended to general integrands with values in UMD Banach spaces.

Best to the author's knowledge, a series approach for the construction of the It\^{o} integral with respect to an infinite dimensional L\'{e}vy process does not exist in the literature so far. The goal of the present paper is to provide such a construction, which is based on the real-valued It\^{o} integral, see, e.g., \cite{Applebaum, Jacod-Shiryaev, Protter}, and where the It\^{o} integral is given by a series of It\^{o} integrals with respect to real-valued L\'{e}vy processes. This approach has the advantage that we can use results from the finite dimensional case, and it might also be beneficial for lecturers teaching students who are already aware of the real-valued It\^{o} integral and have some background in Functional Analysis. In particular, it avoids the tedious procedure of proving that elementary processes are dense in the space of integrable processes.

In \cite{Onno-Levy}, the stochastic integral with respect to an infinite dimensional L\'{e}vy process is defined as a limit of Riemannian sums, and a series expansion is provided. A particular feature of \cite{Onno-Levy} is that stochastic integrals are considered as $L^2$-curves. The connection to the usual It\^{o} integral for a finite dimensional L\'{e}vy process has been established in \cite{Tappe}, see also Appendix B in \cite{Filipovic-Tappe}. Furthermore, we point out the articles \cite{Barbara-Integration} and \cite{Onno-Riedle}, where the theory of stochastic integration with respect to L\'{e}vy processes has been extended to Banach spaces.

The idea to use series expansions for the definition of the stochastic integral has also been utilized in the context of cylindrical processes, see \cite{Riedle-Wiener} for cylindrical Wiener processes and \cite{Applebaum-Riedle-Levy} for cylindrical L\'{e}vy processes.

The construction of the It\^{o} integral, which we present in this paper, is divided into the following steps:
\begin{itemize}
\item For a $H$-valued process $X$ (with $H$ denoting a separable Hilbert space) and a real-valued square-integrable martingale $M$ we define the It\^{o} integral
\begin{align*}
X \bullet M := \sum_{k \in \mathbb{N}} \big( \langle X, f_k \rangle_H \bullet M \big) f_k,
\end{align*}
where $(f_k)_{k \in \mathbb{N}}$ denotes an orthonormal basis of $H$, and $\langle X, f_k \rangle_H \bullet M$ denotes the real-valued It\^{o} integral. We will show that this definition does not depend on the choice of the orthonormal basis.

\item Based on the just defined integral, for a $\ell^2(H)$-valued process $X$ and a sequence $(M^j)_{j \in \mathbb{N}}$ of standard L\'{e}vy processes we define the It\^{o} integral as
\begin{align*}
\sum_{j \in \mathbb{N}} X^j \bullet M^j.
\end{align*}
For this, we will ensure convergence of the series.

\item In the next step, let $L$ denote a $\ell_{\lambda}^2$-valued L\'{e}vy process, where $\ell_{\lambda}^2$ is a weighted space of sequences (cf. \cite{fillnm}). From the L\'{e}vy process $L$ we can construct a sequence $(M^j)_{j \in \mathbb{N}}$ of standard L\'{e}vy processes, and for a $\ell^2(H)$-valued process $X$ we define the It\^{o} integral
\begin{align*}
X \bullet L := \sum_{j \in \mathbb{N}} X^j \bullet M^j.
\end{align*}

\item Finally, let $L$ be a general L\'{e}vy process on some separable Hilbert space $U$ with covariance operator $Q$. Then, there exist sequences of eigenvalues $(\lambda_j)_{j \in \mathbb{N}}$ and eigenvectors, which diagonalize the operator $Q$. Denoting by $L_2^0(H)$ an appropriate space of Hilbert Schmidt operators from $U$ to $H$, our idea is to utilize the integral from the previous step, and to define the It\^{o} integral for a $L_2^0(H)$-valued process $X$ as
\begin{align*}
X \bullet L := \Psi(X) \bullet \Phi(L),
\end{align*}
where $\Phi : U \rightarrow \ell_{\lambda}^2$ and $\Psi : L_2^0(H) \rightarrow \ell^2(H)$ are isometric isomorphisms such that $\Phi(L)$ is a $\ell_{\lambda}^2$-valued L\'{e}vy process. We will show that this definition does not depend on the choice of the eigenvalues and eigenvectors.
\end{itemize}
The remainder of this text is organized as follows: In Section~\ref{sec-prelim} we provide the required preliminaries and notation. After that, we start with the construction of the It\^{o} integral as outlined above. In Section~\ref{sec-step-1} we define the It\^{o} integral for $H$-valued processes with respect to a real-valued square-integrable martingale, and in Section~\ref{sec-step-2} we define the It\^{o} integral for $\ell^2(H)$-valued processes with respect to a sequence of standard L\'{e}vy processes. Section~\ref{sec-Wiener} gives a brief overview about L\'{e}vy processes in Hilbert spaces, together with the required results. 
Then, in Section~\ref{sec-step-3} we define the It\^{o} integral for $\ell^2(H)$-valued processes with respect to a $\ell_{\lambda}^2$-valued L\'{e}vy process, and in Section~\ref{sec-step-4} we define the It\^{o} integral in the general case, where the integrand is a $L_2^0(H)$-valued process and the integrator a general L\'{e}vy process on some separable Hilbert space $U$. We also prove the mentioned series representation of the stochastic integral, and show that it coincides with the usual It\^{o} integral, which has been developed in \cite{P-Z-book}.

\section{Preliminaries and notation}\label{sec-prelim}

In this section, we provide the required preliminary results and some basic notation. 
Throughout this text, let $(\Omega,\mathcal{F},(\mathcal{F}_t)_{t \geq 0},\mathbb{P})$ be a filtered probability space satisfying the usual conditions. For the upcoming results, let $E$ be a separable Banach space, and let $T > 0$ be a finite time horizon.

\begin{definition}
Let $p \geq 1$ be arbitrary.
\begin{enumerate}
\item We define the Lebesgue space
\begin{align*}
\mathcal{L}_T^p(E) := \mathcal{L}^p(\Omega,\mathcal{F}_T,\mathbb{P};\mathbb{D}([0,T];E)),
\end{align*}
where $\mathbb{D}([0,T];E)$ denotes the Skorokhod space consisting of all c\`{a}dl\`{a}g functions from $[0,T]$ to $E$, equipped with the supremum norm.

\item We denote by $\mathcal{A}_T^p(E)$ the space of all $E$-valued adapted processes $X \in \mathcal{L}_T^p(E)$.

\item We denote by $\mathcal{M}_T^p(E)$ the space of all $E$-valued martingales $M \in \mathcal{L}_T^p(E)$.

\item We define the factor spaces
\begin{align*}
M_T^p(E) := \mathcal{M}_T^p(E) / N, \quad A_T^p(E) := \mathcal{A}_T^p(E) / N, \quad L_T^p(E) := \mathcal{L}_T^p(E) / N,
\end{align*}
where $N \subset \mathcal{M}_T^p(E)$ denotes the subspace consisting of all $M \in \mathcal{M}_T^p(E)$ with $M = 0$ up to indistinguishability.
\end{enumerate}
\end{definition}

\begin{remark}
Let us emphasize the following:
\begin{enumerate}
\item Since the Skorokhod space $\mathbb{D}([0,T];E)$ equipped with the supremum norm is a Banach space, the Lebesgue space $L_T^p(E)$ equipped with the standard norm 
\begin{align*}
\| X \|_{L_T^p(E)} := \mathbb{E} \big[ \| X \|_E^p \big]^{1/p}
\end{align*}
is a Banach space, too.

\item By the completeness of the filtration $(\mathcal{F}_t)_{t \geq 0}$, adaptedness of an element $X \in L_T^p(E)$ does not depend on the choice of the representative. This ensures that the factor space $A_T^p(E)$ of adapted processes is well-defined.

\item The definition of $E$-valued martingales relies on the existence of conditional expectation in Banach spaces, which has been established in \cite[Prop.~1.10]{Da_Prato}.
\end{enumerate}
\end{remark}

Note that we have the inclusions
\begin{align*}
M_T^p(E) \subset A_T^p(E) \subset L_T^p(E).
\end{align*}
The following auxiliary result shows that these inclusions are closed.

\begin{lemma}\label{lemma-closed}
Let $p \geq 1$ be arbitrary. Then, the following statements are true:
\begin{enumerate}
\item $M_T^p(E)$ is closed in $A_T^p(E)$.

\item $A_T^p(E)$ is closed in $L_T^p(E)$.
\end{enumerate}
\end{lemma}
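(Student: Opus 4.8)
The plan is to treat the two statements separately; in each case I take a sequence in the smaller space whose equivalence classes converge in the $L_T^p(E)$-norm, fix representatives, and show that the limit again belongs to the smaller space. Throughout I would exploit two standard facts: that $L^p$-convergence yields almost sure convergence along a subsequence, and that the Banach space valued conditional expectation $\mathbb{E}[\,\cdot \mid \mathcal{F}_s]$ (whose existence is guaranteed by \cite[Prop.~1.10]{Da_Prato}) is a contraction on $L^p(\Omega,\mathcal{F}_T,\mathbb{P};E)$, by Jensen's inequality applied to the norm.

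For statement (2), let $(X^n)_{n \in \mathbb{N}}$ be a sequence in $\mathcal{A}_T^p(E)$ whose classes converge to some $X \in L_T^p(E)$, and fix representatives. Since
\begin{align*}
\|X^n - X\|_{L_T^p(E)}^p = \mathbb{E}\Big[ \sup_{t \in [0,T]} \|X_t^n - X_t\|_E^p \Big] \longrightarrow 0,
\end{align*}
I can pass to a subsequence along which $\sup_{t \in [0,T]} \|X_t^{n_k} - X_t\|_E \to 0$ almost surely. In particular, for each fixed $t \in [0,T]$ we have $X_t^{n_k} \to X_t$ almost surely. Each $X_t^{n_k}$ is $\mathcal{F}_t$-measurable by adaptedness, and since the filtration is complete, an almost sure limit of $\mathcal{F}_t$-measurable $E$-valued maps is again $\mathcal{F}_t$-measurable; hence $X_t$ is $\mathcal{F}_t$-measurable for every $t$, so $X$ is adapted and its class lies in $A_T^p(E)$.

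For statement (1), let $(M^n)_{n \in \mathbb{N}}$ be a sequence in $\mathcal{M}_T^p(E)$ whose classes converge to some $M \in A_T^p(E)$. For each fixed $t \in [0,T]$ the estimate $\mathbb{E}[\|M_t^n - M_t\|_E^p] \le \|M^n - M\|_{L_T^p(E)}^p$ shows that $M_t^n \to M_t$ in $L^p(\Omega,\mathcal{F}_T,\mathbb{P};E)$. Fixing $0 \le s \le t \le T$ and using that $\mathbb{E}[\,\cdot \mid \mathcal{F}_s]$ is a contraction, I obtain $\mathbb{E}[M_t^n \mid \mathcal{F}_s] \to \mathbb{E}[M_t \mid \mathcal{F}_s]$ in $L^p$. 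On the other hand, the martingale property of $M^n$ gives $\mathbb{E}[M_t^n \mid \mathcal{F}_s] = M_s^n \to M_s$ in $L^p$. By uniqueness of the limit it follows that $\mathbb{E}[M_t \mid \mathcal{F}_s] = M_s$ almost surely, so $M$ is a martingale and its class lies in $M_T^p(E)$.

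The two continuity facts invoked above are routine; the only point that requires a little care is the measurability of the limit in part (2), where passing to an almost surely convergent subsequence (rather than trying to argue directly from sup-norm convergence) together with the completeness of the filtration is what makes the argument clean. I do not expect any genuine obstacle beyond keeping the $L^p$-contraction property of the vector valued conditional expectation available.
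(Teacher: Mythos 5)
Your proposal is correct, and your part (2) is essentially identical to the paper's proof: both pass to a subsequence converging almost surely (the paper extracts it for each fixed $t$ from $\mathbb{E}[\|X_t^n - X_t\|_E^p] \to 0$, you extract a single one from the sup-norm convergence, an immaterial difference) and then use completeness of the filtration to get $\mathcal{F}_t$-measurability of the limit. Your part (1), however, takes a genuinely different route. The paper never touches conditional expectations: it fixes a bounded stopping time $\tau \leq T$, passes to the limit in $\mathbb{E}[M_\tau^n] = \mathbb{E}[M_0^n]$ to obtain $\mathbb{E}[M_\tau] = \mathbb{E}[M_0]$, and then invokes the converse direction of Doob's optional stopping theorem for $E$-valued martingales (citing \cite[Remark~2.2.5]{Prevot-Roeckner}) to conclude that $M$ is a martingale. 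You instead verify the martingale identity directly, passing to the limit in $\mathbb{E}[M_t^n \mid \mathcal{F}_s] = M_s^n$ via the $L^p$-contractivity of the vector-valued conditional expectation; that contractivity is indeed available, since \cite[Prop.~1.10]{Da_Prato} gives $\|\mathbb{E}[X \mid \mathcal{G}]\|_E \leq \mathbb{E}[\|X\|_E \mid \mathcal{G}]$ almost surely, and conditional Jensen applied to $x \mapsto x^p$ then yields $\mathbb{E}[\|\mathbb{E}[X \mid \mathcal{G}]\|_E^p] \leq \mathbb{E}[\|X\|_E^p]$. Your approach is arguably more elementary and self-contained, needing only the defining property of the conditional expectation and one contraction estimate; the paper's approach buys the convenience of working only with (unconditional) expectations of the approximating sequence, at the price of importing the optional-stopping characterization of Banach-space-valued martingales from the literature. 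Both arguments are complete and correct.
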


\begin{proof}
Let $(M^n)_{n \in \mathbb{N}} \subset M_T^p(E)$ be a sequence and let $M \in A_T^p(E)$ be such that $M^n \rightarrow M$ in $L_T^p(E)$. Furthermore, let $\tau \leq T$ be a bounded stopping time. Then we have
\begin{align*}
\mathbb{E} \big[ \| M_{\tau} \|_E^p \big] \leq \mathbb{E} \bigg[ \sup_{t \in [0,T]} \| M_t \|_E^p \bigg] < \infty,
\end{align*}
showing that $M_{\tau} \in L^p(\Omega,\mathcal{F}_{\tau},\mathbb{P};E)$.
Furthermore, we have
\begin{align*}
\mathbb{E} \big[ \| M_{\tau}^n - M_{\tau} \|_E^p \big] \leq \mathbb{E} \bigg[ \sup_{t \in [0,T]} \| M_t^n - M_t \|_E^p \bigg] \rightarrow 0.
\end{align*}
By Doob's optional stopping theorem (which also holds true for $E$-valued martingales, see \cite[Remark~2.2.5]{Prevot-Roeckner}), it follows that
\begin{align*}
\mathbb{E}[M_{\tau}] = \lim_{n \rightarrow \infty} \mathbb{E}[M_{\tau}^n] = \lim_{n \rightarrow \infty} \mathbb{E}[M_0^n] = \mathbb{E}[M_0]. 
\end{align*}
Using Doob's optional stopping theorem again, we conclude that $M \in M_T^p(E)$, proving the first statement.

Now, let $(X^n)_{n \in \mathbb{N}} \subset A_T^p(E)$ be a sequence and let $X \in L_T^p(E)$ be such that $X^n \rightarrow X$ in $L_T^p(E)$. Then, for each $t \in [0,T]$ we have
\begin{align*}
\mathbb{E} \big[ \| X_{t}^n - X_{t} \|_E^p \big] \leq \mathbb{E} \bigg[ \sup_{s \in [0,T]} \| X_s^n - X_s \|_E^p \bigg] \rightarrow 0,
\end{align*}
and hence $\mathbb{P}$--almost surely $X_t^{n_k} \rightarrow X_t$ for some subsequence $(n_k)_{k \in \mathbb{N}}$,
showing that $X_t$ is $\mathcal{F}_t$-measurable. This proves $X \in A_T^p(E)$, providing the second statement. 
\end{proof}

Note that, by Doob's martingale inequality \cite[Thm.~2.2.7]{Prevot-Roeckner}, for $p > 1$ an equivalent norm on $M_T^p(E)$ is given by
\begin{align*}
\| M \|_{M_T^p(E)} := \mathbb{E} \big[ \| M_T \|_E^p \big]^{1/p}.
\end{align*}
Furthermore, if $E=H$ is a separable Hilbert space, then $M_T^2(H)$ is a separable Hilbert space equipped with the inner product
\begin{align*}
\langle M,N \rangle_{M_T^2(H)} := \mathbb{E} \big[ \langle M_T,N_T \rangle_H \big].
\end{align*}
Finally, we recall the following result about series of pairwise orthogonal vectors in Hilbert spaces.

\begin{lemma}\label{lemma-series-Hilbert}
Let $H$ be a separable Hilbert space and let $(h_n)_{n \in \mathbb{N}} \subset H$ be a sequence with 
$\langle h_n,h_m \rangle_H = 0$ for $n \neq m$. 
Then, the following statements are equivalent:
\begin{enumerate}
\item The series $\sum_{n=1}^{\infty} h_n$ converges in $H$.

\item The series $\sum_{n \in \mathbb{N}} h_n$ converges unconditionally in $H$.

\item We have $\sum_{n=1}^{\infty} \| h_n \|_H^2 < \infty$.
\end{enumerate}
If the previous conditions are satisfied, then we have
\begin{align*}
\bigg\| \sum_{n=1}^{\infty} h_n \bigg\|_H^2 = \sum_{n=1}^{\infty} \| h_n \|_H^2.
\end{align*}
\end{lemma}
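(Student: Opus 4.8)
The plan is to reduce everything to the Pythagorean identity for finite sums of pairwise orthogonal vectors: expanding the inner product and using $\langle h_n, h_m \rangle_H = 0$ for $n \neq m$, one gets $\| \sum_{n=M+1}^{N} h_n \|_H^2 = \sum_{n=M+1}^{N} \| h_n \|_H^2$ for all $M < N$. Writing $S_N := \sum_{n=1}^{N} h_n$ for the partial sums in $H$ and $T_N := \sum_{n=1}^{N} \| h_n \|_H^2$ for the partial sums in $\mathbb{R}$, this identity ties the Cauchy behaviour of $(S_N)$ directly to that of $(T_N)$, and everything follows from it.

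First I would prove the equivalence of (1) and (3). For $M < N$ the identity gives $\| S_N - S_M \|_H^2 = T_N - T_M$, so $(S_N)$ is a Cauchy sequence in $H$ if and only if $(T_N)$ is a Cauchy sequence in $\mathbb{R}$. Since $H$ is complete, $(S_N)$ is Cauchy precisely when it converges, which is (1); and since $(\| h_n \|_H^2)_{n \in \mathbb{N}}$ consists of non-negative reals, the increasing sequence $(T_N)$ is Cauchy precisely when $\sum_{n=1}^{\infty} \| h_n \|_H^2 < \infty$, which is (3). Hence (1) and (3) are equivalent at one stroke.

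Next I would close the cycle via (3) $\Rightarrow$ (2) $\Rightarrow$ (1). The implication (2) $\Rightarrow$ (1) is immediate, as unconditional convergence includes convergence of the series in its given order. For (3) $\Rightarrow$ (2), I fix an arbitrary permutation $\sigma$ of $\mathbb{N}$ and note that the rearranged sequence $(h_{\sigma(n)})_{n \in \mathbb{N}}$ is again pairwise orthogonal, while $\sum_{n=1}^{\infty} \| h_{\sigma(n)} \|_H^2 = \sum_{n=1}^{\infty} \| h_n \|_H^2 < \infty$ because rearrangement leaves the sum of a non-negative series unchanged. Applying the already established implication (3) $\Rightarrow$ (1) to the permuted sequence shows that $\sum_{n=1}^{\infty} h_{\sigma(n)}$ converges, which is exactly unconditional convergence. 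Finally, the asserted norm identity follows from continuity of the norm: if $S := \sum_{n=1}^{\infty} h_n$ denotes the limit, then $\| S \|_H^2 = \lim_{N \to \infty} \| S_N \|_H^2 = \lim_{N \to \infty} T_N = \sum_{n=1}^{\infty} \| h_n \|_H^2$.

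I do not expect a serious obstacle, since the whole argument is driven by the one finite-dimensional identity. The only point requiring mild care is the step (3) $\Rightarrow$ (2): one must be explicit that passing to a permutation preserves both orthogonality and the finiteness of the squared-norm sum. Should the working definition of unconditional convergence also demand that all rearrangements share the same limit, this is a minor addendum — testing $S_\sigma := \sum_{n=1}^{\infty} h_{\sigma(n)}$ against each $h_m$ gives $\langle S_\sigma, h_m \rangle_H = \| h_m \|_H^2$ independently of $\sigma$, and since every $S_\sigma$ lies in the closed linear span of $(h_n)_{n \in \mathbb{N}}$, the limits must coincide.
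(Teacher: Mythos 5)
Your proof is correct, and it takes a genuinely different route from the paper: the paper does not prove this lemma at all, but simply cites Rudin's \emph{Functional Analysis} (Thm.~12.6) and Werner's \emph{Funktionalanalysis} (Satz~V.4.8), where these equivalences are established. Your argument, by contrast, is self-contained and elementary. The single Pythagorean identity $\| \sum_{n=M+1}^{N} h_n \|_H^2 = \sum_{n=M+1}^{N} \| h_n \|_H^2$ yields the equivalence of (1) and (3) by matching the Cauchy property of the partial sums $S_N$ in $H$ with that of the increasing real sequence $T_N$; the implication (3) $\Rightarrow$ (2) follows by applying the already proved (3) $\Rightarrow$ (1) to an arbitrary permutation, using that rearrangement preserves both pairwise orthogonality and the value of a non-negative series; (2) $\Rightarrow$ (1) is trivial; and the norm identity follows from continuity of the norm. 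You were also right to flag the definitional subtlety in ``unconditional convergence'': if the definition demands that all rearrangements share one limit, your addendum settles it correctly, since $\langle S_\sigma, h_m \rangle_H = \| h_m \|_H^2$ for every $\sigma$ and every $m$, and any two rearranged sums differ by a vector that is orthogonal to all $h_m$ yet lies in their closed linear span, hence is zero. What the paper's citation buys is brevity; what your argument buys is that the lemma --- which underpins the convergence statements in Propositions~\ref{prop-int-well-def}, \ref{prop-Ito-isom-1}, \ref{prop-int-conv-2} and \ref{prop-Ito-isom-step-2} --- rests only on first principles of Hilbert space theory, in the same spirit as the paper's stated goal of avoiding heavier machinery.
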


\begin{proof}
This follows from \cite[Thm.~12.6]{Rudin} and \cite[Satz~V.4.8]{Werner}.
\end{proof}

\section{The It\^{o} integral with respect to a real-valued square-integrable martingale}\label{sec-step-1}

In this section, we define the It\^{o} integral for Hilbert space valued processes with respect to a real-valued, square-integrable martingale, which is based on the real-valued It\^{o} integral.

In what follows, let $H$ be s separable Hilbert space, and let $T > 0$ be a finite time horizon. Furthermore, let $M \in \mathcal{M}_T^2(\mathbb{R})$ be a square-integrable martingale. Recall that the quadratic variation $\langle M,M \rangle$ is the (up to indistinguishability) unique real-valued, non-decreasing, predictable process with $\langle M,M \rangle_0 = 0$ such that $M^2 - \langle M,M \rangle$ is a martingale.

\begin{proposition}\label{prop-int-well-def}
Let $X$ be a $H$-valued, predictable process with
\begin{align}\label{int-fk-exists}
\mathbb{E} \bigg[ \int_0^T \| X_s \|_H^2 d \langle M,M \rangle_s \bigg] < \infty.
\end{align}
Then, for every orthonormal basis $(f_k)_{k \in \mathbb{N}}$ of $H$ the series
\begin{align}\label{series-int-fk}
\sum_{k \in \mathbb{N}} \big( \langle X, f_k \rangle_H \bullet M \big) f_k
\end{align}
converges unconditionally in $M_T^2(H)$, and its value does not depend on the choice of the orthonormal basis $(f_k)_{k \in \mathbb{N}}$.
\end{proposition}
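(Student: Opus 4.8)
The plan is to verify the two assertions --- unconditional convergence and basis independence --- separately, exploiting throughout the Hilbert space structure of $M_T^2(H)$ and reducing everything to the real-valued theory. First I would check that each summand in \eqref{series-int-fk} is well defined: for fixed $k$ the scalar process $\langle X, f_k \rangle_H$ is predictable, and by the Cauchy--Schwarz inequality $|\langle X_s, f_k \rangle_H|^2 \le \|X_s\|_H^2$, so that the hypothesis \eqref{int-fk-exists} guarantees $\mathbb{E}[\int_0^T |\langle X_s, f_k \rangle_H|^2 \, d\langle M,M \rangle_s] < \infty$. Hence each real-valued It\^o integral $\langle X, f_k \rangle_H \bullet M$ exists as an element of $M_T^2(\mathbb{R})$.

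For the convergence I would observe that the summands are pairwise orthogonal in $M_T^2(H)$: using the inner product $\langle N,P \rangle_{M_T^2(H)} = \mathbb{E}[\langle N_T, P_T \rangle_H]$, the cross term for $k \neq l$ factors as a scalar expectation times $\langle f_k, f_l \rangle_H = 0$. By Lemma~\ref{lemma-series-Hilbert} it therefore suffices to show that $\sum_k \| (\langle X, f_k \rangle_H \bullet M) f_k \|_{M_T^2(H)}^2 < \infty$, and the same lemma will then also deliver the unconditional convergence. Each squared norm equals $\mathbb{E}[ |(\langle X, f_k \rangle_H \bullet M)_T|^2 ]$, which by the real-valued It\^o isometry is $\mathbb{E}[\int_0^T |\langle X_s, f_k \rangle_H|^2 \, d\langle M,M \rangle_s]$. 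Summing over $k$, interchanging sum and integral by Tonelli's theorem and using Parseval's identity $\sum_k |\langle X_s, f_k \rangle_H|^2 = \|X_s\|_H^2$, the total equals exactly the finite quantity in \eqref{int-fk-exists}.

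For the basis independence I would fix a second orthonormal basis $(g_m)_{m \in \mathbb{N}}$, denote by $S_f$ and $S_g$ the corresponding sums, and show they agree by comparing coordinates. Since the series converge in $M_T^2(H)$, the terminal values converge in $L^2(\Omega;H)$, and testing against $g_m$ gives $\langle (S_f)_T, g_m \rangle_H = \sum_k \langle f_k, g_m \rangle_H (\langle X, f_k \rangle_H \bullet M)_T$. The crucial point is the identity $\sum_k \langle f_k, g_m \rangle_H (\langle X, f_k \rangle_H \bullet M) = \langle X, g_m \rangle_H \bullet M$, i.e.\ that the infinite sum may be pulled inside the It\^o integral. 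I would justify this by linearity for the finite partial sums together with the It\^o isometry: writing $r_N := g_m - \sum_{k \le N} \langle g_m, f_k \rangle_H f_k$, one has $\langle X_s, g_m \rangle_H - \sum_{k \le N} \langle f_k, g_m \rangle_H \langle X_s, f_k \rangle_H = \langle X_s, r_N \rangle_H$, and since $\|r_N\|_H \le 1$ with $r_N \to 0$ in $H$, dominated convergence (with dominant $\|X_s\|_H^2$, integrable by \eqref{int-fk-exists}) yields convergence of the integrands in $L^2(d\langle M,M \rangle \otimes \mathbb{P})$, hence of the integrals in $M_T^2(\mathbb{R})$. Comparing the two limits of the partial sums gives $\langle (S_f)_T, g_m \rangle_H = (\langle X, g_m \rangle_H \bullet M)_T = \langle (S_g)_T, g_m \rangle_H$ almost surely for every $m$, so $(S_f)_T = (S_g)_T$; as both are square-integrable martingales with $N_t = \mathbb{E}[N_T \mid \mathcal{F}_t]$, they are indistinguishable, proving $S_f = S_g$.

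I expect the main obstacle to be precisely this interchange of the infinite series with the It\^o integral in the basis-independence step: it is not merely formal and must be controlled in the $L^2(d\langle M,M \rangle \otimes \mathbb{P})$ norm via the isometry and dominated convergence. By contrast, the convergence statement is comparatively routine once the pairwise orthogonality is noted, since Lemma~\ref{lemma-series-Hilbert} reduces it to the scalar computation above.
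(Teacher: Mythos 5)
Your proposal is correct and follows essentially the same route as the paper's proof: pairwise orthogonality in $M_T^2(H)$ plus the real-valued It\^{o} isometry and Lemma~\ref{lemma-series-Hilbert} for unconditional convergence, and then the interchange of the series with the It\^{o} integral, controlled by the isometry and dominated convergence with dominating function $\|X_s\|_H^2$, for basis independence. The only cosmetic difference is that you test the terminal values against the countably many vectors $g_m$ of the second basis (which handles the null-set issue directly), whereas the paper tests against arbitrary $h \in H$ and then invokes separability.
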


\begin{proof}
Let $(f_k)_{k \in \mathbb{N}}$ be an orthonormal basis of $H$.
For $j,k \in \mathbb{N}$ with $j \neq k$ we have
\begin{equation}\label{orthogonal-in-MT2}
\begin{aligned}
&\big\langle \big( \langle X, f_j \rangle_H \bullet M \big) f_j, \big( \langle X, f_k \rangle_H \bullet M \big) f_k \big\rangle_{M_T^2(H)} 
\\ &= \mathbb{E} \bigg[ \Big\langle \bigg( \int_0^T \langle X_s, f_j \rangle_H d M_s \bigg) f_j, \bigg( \int_0^T \langle X_s, f_k \rangle_H d M_s \bigg) f_k \Big\rangle_H \bigg]
\\ &= \mathbb{E} \bigg[ \bigg( \int_0^T \langle X_s, f_j \rangle_H d M_s \bigg) \bigg( \int_0^T \langle X_s, f_k \rangle_H d M_s \bigg) \langle f_j,f_k \rangle_H \bigg] = 0.
\end{aligned}
\end{equation}
Moreover, by the It\^{o} isometry for the real-valued It\^{o} integral and the monotone convergence theorem we obtain
\begin{equation}\label{calc-Ito-isom}
\begin{aligned}
&\sum_{k=1}^{\infty} \big\| \big( \langle X, f_k \rangle_H \bullet M \big) f_k \big\|_{M_T^2(H)}^2 = \sum_{k=1}^{\infty} \mathbb{E} \Bigg[ \bigg\| \bigg( \int_0^T \langle X_s, f_k \rangle_H d M_s \bigg) f_k \bigg\|_H^2 \Bigg]
\\ &= \sum_{k=1}^{\infty} \mathbb{E} \Bigg[ \bigg| \int_0^T \langle X_s, f_k \rangle_H d M_s \bigg|^2 \Bigg] = \sum_{k=1}^{\infty} \mathbb{E} \bigg[ \int_0^T | \langle X_s, f_k \rangle_H |^2 d\langle M,M \rangle_s \bigg] 
\\ &= \mathbb{E} \bigg[ \int_0^T \sum_{k=1}^{\infty} | \langle X_s, f_k \rangle_H |^2 d\langle M,M \rangle_s \bigg] = \mathbb{E} \bigg[ \int_0^T \| X_s \|_H^2 d\langle M,M \rangle_s \bigg].
\end{aligned}
\end{equation}
Therefore, by (\ref{int-fk-exists}) and Lemma~\ref{lemma-series-Hilbert}, the series (\ref{series-int-fk}) converges unconditionally in $M_T^2(H)$.

Now, let $(g_k)_{k \in \mathbb{N}}$ be another orthonormal basis of $H$. We define $\mathbb{J}^f,\mathbb{J}^g \in M_T^2(H)$ by
\begin{align*}
\mathbb{J}^f := \sum_{k=1}^{\infty} \big( \langle X,f_k \rangle_H \bullet M \big) f_k \quad \text{and} \quad \mathbb{J}^g := \sum_{k=1}^{\infty} \big( \langle X,g_k \rangle_H \bullet M \big) g_k.
\end{align*}
Let $h \in H$ be arbitrary. Then we have
\begin{align*}
\langle h,\mathbb{J}^f \rangle_H, \langle h,\mathbb{J}^g \rangle_H \in M_T^2(\mathbb{R})
\end{align*}
and the identity
\begin{align*}
&\| \langle h, \mathbb{J}^f \rangle_H - \langle h,X \rangle_H \bullet M \|_{M_T^2(\mathbb{R})}^2 
\\ &= \bigg\| \Big\langle h, \sum_{k=1}^{\infty} \big( \langle X,f_k \rangle_H \bullet M \big) f_k \Big\rangle_H - \langle h,X \rangle_H \bullet M \bigg\|_{M_T^2(\mathbb{R})}^2
\\ &= \bigg\| \sum_{k=1}^{\infty} \big( \langle h,f_k \rangle_H \langle f_k,X \rangle_H \bullet M \big) - \langle h,X \rangle_H \bullet M \bigg\|_{M_T^2(\mathbb{R})}^2
\\ &= \lim_{n \rightarrow \infty} \bigg\| \sum_{k=1}^{n} \big( \langle h,f_k \rangle_H \langle f_k,X \rangle_H \bullet M \big) - \langle h,X \rangle_H \bullet M \bigg\|_{M_T^2(\mathbb{R})}^2
\\ &= \lim_{n \rightarrow \infty} \bigg\| \bigg( \sum_{k=1}^{n} \langle h,f_k \rangle_H \langle f_k,X \rangle_H - \langle h,X \rangle_H \bigg) \bullet M \bigg\|_{M_T^2(\mathbb{R})}^2.
\end{align*}
For all $x \in H$ we have
\begin{align*}
\bigg| \sum_{k=1}^{n} \langle x, f_k \rangle_H \langle f_k,h \rangle_H - \langle x,h \rangle_H \bigg|^2 \rightarrow 0 \quad \text{as $n \rightarrow \infty$,}
\end{align*}
and, by the Cauchy-Schwarz inequality,
\begin{align*}
&\bigg| \sum_{k=1}^{n} \langle x, f_k \rangle_H \langle f_k,h \rangle_H - \langle x,h \rangle_H \bigg|^2 = \bigg| \sum_{k=n+1}^{\infty} \langle x, f_k \rangle_H \langle f_k,h \rangle_H \bigg|^2
\\ &\leq \bigg( \sum_{k=1}^\infty | \langle x,f_k \rangle_H |^2 \bigg) \bigg( \sum_{k=1}^\infty | \langle f_k,h \rangle_H |^2 \bigg) = \| x \|_H^2 \| h \|_H^2 \quad \text{for each $n \in \mathbb{N}$.}
\end{align*}
Therefore, by the It\^{o} isometry for the real-valued It\^{o} integral and Lebesgue's dominated convergence theorem together with (\ref{int-fk-exists}) we obtain
\begin{align*}
&\| \langle h, \mathbb{J}^f \rangle_H - \langle h,X \rangle_H \bullet M \|_{M_T^2(\mathbb{R})}^2
\\ &= \lim_{n \rightarrow \infty} \mathbb{E} \Bigg[ \bigg| \int_0^T \bigg( \sum_{k=1}^{n} \langle h,f_k \rangle_H \langle f_k,X_s \rangle_H - \langle h,X_s \rangle_H \bigg) d M_s \bigg|^2 \Bigg]
\\ &= \lim_{n \rightarrow \infty} \mathbb{E} \bigg[ \int_0^T \bigg| \sum_{k=1}^{n} \langle h,f_k \rangle_H \langle f_k,X \rangle_H - \langle h,X \rangle_H \bigg|^2 d\langle M,M \rangle_s \bigg] = 0.
\end{align*}
Analogously, we prove that
\begin{align*}
\| \langle h, \mathbb{J}^g \rangle_H - \langle h,X \rangle_H \bullet M \|_{M_T^2(\mathbb{R})}^2 = 0.
\end{align*}
Therefore, denoting by $\tilde{\mathbb{J}}^f,\tilde{\mathbb{J}}^g \in \mathcal{M}_T^2(H)$ representatives of $\mathbb{J}^f$, $\mathbb{J}^g$, we obtain
\begin{align*}
\langle h,\tilde{\mathbb{J}}_T^f \rangle_H = \langle h,\tilde{\mathbb{J}}_T^g \rangle_H \quad \text{for all $h \in H$,} \quad \text{$\mathbb{P}$--almost surely.}
\end{align*}
By separability of $H$, we deduce that
\begin{align*}
\langle h,\tilde{\mathbb{J}}_T^f \rangle_H = \langle h,\tilde{\mathbb{J}}_T^g \rangle_H \quad \text{$\mathbb{P}$--almost surely,} \quad \text{for all $h \in H$.}
\end{align*}
Consequently, we have 
\begin{align*}
\tilde{\mathbb{J}}_T^f = \tilde{\mathbb{J}}_T^g \quad \text{$\mathbb{P}$--almost surely,} 
\end{align*}
implying $\mathbb{J}^f = \mathbb{J}^g$. This proves that the value of the series (\ref{series-int-fk}) does not depend on the choice of the orthonormal basis.
\end{proof}

Now, Proposition~\ref{prop-int-well-def} gives rise to the following definition:

\begin{definition}
For every $H$-valued, predictable process $X$ satisfying (\ref{int-fk-exists}) we define the \emph{It\^{o} integral} $X \bullet M = (\int_0^t X_s dM_s)_{t \in [0,T]}$ as
\begin{align}\label{def-Ito-H}
X \bullet M := \sum_{k \in \mathbb{N}} \big( \langle X, f_k \rangle_H \bullet M \big) f_k,
\end{align}
where $(f_k)_{k \in \mathbb{N}}$ denotes an orthonormal basis of $H$.
\end{definition}

According to Proposition~\ref{prop-int-well-def}, the Definition (\ref{def-Ito-H}) of the It\^{o} integral is independent of the choice of the orthonormal basis $(f_k)_{k \in \mathbb{N}}$, and the integral process $X \bullet M$ belongs to $M_T^2(H)$.

\begin{remark}
As the proof of Proposition~\ref{prop-int-well-def} shows, the components of the It\^{o} integral $X \bullet M$ are pairwise orthogonal elements of the Hilbert space $M_T^2(H)$.
\end{remark}

\begin{proposition}\label{prop-Ito-isom-1}
For every $H$-valued, predictable process $X$ satisfying (\ref{int-fk-exists}) we have the \emph{It\^{o} isometry}
\begin{align*}
\mathbb{E} \Bigg[ \bigg\| \int_0^T X_s dM_s \bigg\|_H^2 \Bigg] = \mathbb{E} \bigg[ \int_0^T \| X_s \|_H^2 d \langle M,M \rangle_s \bigg]. 
\end{align*}
\end{proposition}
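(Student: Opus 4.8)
The plan is to exploit the fact that the It\^{o} integral $X \bullet M$ has already been expressed as an unconditionally convergent series of pairwise orthogonal elements of the Hilbert space $M_T^2(H)$, as established in Proposition~\ref{prop-int-well-def} and recorded in the subsequent remark. Since orthogonality is the key structural feature, the natural route is to apply the Pythagorean identity from Lemma~\ref{lemma-series-Hilbert} and then recognize that the resulting sum of squared norms was computed explicitly during the proof of Proposition~\ref{prop-int-well-def}.

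First I would fix an orthonormal basis $(f_k)_{k \in \mathbb{N}}$ of $H$ and recall from \eqref{def-Ito-H} that
\begin{align*}
X \bullet M = \sum_{k=1}^{\infty} \big( \langle X, f_k \rangle_H \bullet M \big) f_k
\end{align*}
as an element of $M_T^2(H)$. By the remark following the definition, the summands $\big( \langle X, f_k \rangle_H \bullet M \big) f_k$ are pairwise orthogonal in $M_T^2(H)$; this is precisely the content of the computation \eqref{orthogonal-in-MT2}. Hence the final assertion of Lemma~\ref{lemma-series-Hilbert} applies and gives the Pythagorean identity
\begin{align*}
\| X \bullet M \|_{M_T^2(H)}^2 = \sum_{k=1}^{\infty} \big\| \big( \langle X, f_k \rangle_H \bullet M \big) f_k \big\|_{M_T^2(H)}^2.
\end{align*}

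Next I would observe that the right-hand side is exactly the sum evaluated in the chain of equalities \eqref{calc-Ito-isom}, where the real-valued It\^{o} isometry and the monotone convergence theorem were used to show that
\begin{align*}
\sum_{k=1}^{\infty} \big\| \big( \langle X, f_k \rangle_H \bullet M \big) f_k \big\|_{M_T^2(H)}^2 = \mathbb{E} \bigg[ \int_0^T \| X_s \|_H^2 \, d\langle M,M \rangle_s \bigg].
\end{align*}
Finally, unwinding the definition of the norm on $M_T^2(H)$ via the equivalent norm $\| M \|_{M_T^2(H)} = \mathbb{E}[ \| M_T \|_H^2 ]^{1/2}$ identifies the left-hand side with $\mathbb{E} [ \| \int_0^T X_s \, dM_s \|_H^2 ]$, which completes the proof. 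I do not anticipate a genuine obstacle here, since all the analytic work — the interchange of summation and integration, and the application of the real-valued isometry — was already carried out in \eqref{calc-Ito-isom}; the only point requiring a moment's care is to make sure the norm on $M_T^2(H)$ is read as the terminal-value norm, so that the statement appears in the form claimed.
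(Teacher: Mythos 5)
Your proposal is correct and follows essentially the same route as the paper's own proof: orthogonality of the summands via \eqref{orthogonal-in-MT2}, the Pythagorean identity from Lemma~\ref{lemma-series-Hilbert}, and the evaluation of the sum of squared norms already carried out in \eqref{calc-Ito-isom}. Your closing remark about reading the $M_T^2(H)$-norm as the terminal-value norm is exactly the step the paper performs implicitly through its definition $\langle M,N \rangle_{M_T^2(H)} = \mathbb{E}[\langle M_T,N_T \rangle_H]$, so there is nothing to add.
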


\begin{proof}
Let $(f_k)_{k \in \mathbb{N}}$ be an orthonormal basis of $H$. According to (\ref{orthogonal-in-MT2}) we have
\begin{align*}
\big\langle \big( \langle X, f_j \rangle_H \bullet M \big) f_j, \big( \langle X, f_k \rangle_H \bullet M \big) f_k \big\rangle_{M_T^2(H)} = 0 \quad \text{for $j \neq k$.}
\end{align*}
Thus, by Lemma~\ref{lemma-series-Hilbert} and (\ref{calc-Ito-isom}) we obtain
\begin{align*}
&\mathbb{E} \Bigg[ \bigg\| \int_0^T X_s dM_s \bigg\|_H^2 \Bigg] = \| X \bullet M \|_{M_T^2(H)}^2 = \bigg\| \sum_{k = 1}^{\infty} \big( \langle X, f_k \rangle_H \bullet M \big) f_k \bigg\|_{M_T^2(H)}^2
\\ &= \sum_{k=1}^{\infty} \big\| \big( \langle X, f_k \rangle_H \bullet M \big) f_k \big\|_{M_T^2(H)}^2 = \mathbb{E} \bigg[ \int_0^T \| X_s \|_H^2 d \langle M,M \rangle_s \bigg],
\end{align*}
finishing the proof.
\end{proof}

\begin{proposition}\label{prop-simple-1}
Let $X$ be a $H$-valued simple process of the form
\begin{align*}
X = X_0 \mathbbm{1}_{\{0\}} + \sum_{i=1}^n X_i \mathbbm{1}_{(t_i,t_{i+1}]}
\end{align*}
with $0 = t_1 < \ldots < t_{n+1} = T$ and $\mathcal{F}_{t_i}$-measurable random variables $X_i : \Omega \rightarrow H$ for $i = 0,\ldots,n$. Then, we have
\begin{align*}
X \bullet M = \sum_{i=1}^n X_i (M^{t_{i+1}} - M^{t_i}).
\end{align*}
\end{proposition}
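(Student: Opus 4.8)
The plan is to reduce the computation to the known formula for the real-valued It\^{o} integral of a simple process and then to recombine the resulting scalar integrals via the orthonormal basis expansion in $H$. Throughout I would fix an orthonormal basis $(f_k)_{k \in \mathbb{N}}$ of $H$ and work with the defining series (\ref{def-Ito-H}). Since the simple process $X$ is predictable and (as we assume, so that the integral is defined) satisfies (\ref{int-fk-exists}), Proposition~\ref{prop-int-well-def} guarantees that $X \bullet M$ is a well-defined element of $M_T^2(H)$, independent of the chosen basis.

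First I would compute the scalar integrands. Taking the inner product of $X$ with $f_k$ produces the real-valued simple predictable process
\[
\langle X, f_k \rangle_H = \langle X_0, f_k \rangle_H \mathbbm{1}_{\{0\}} + \sum_{i=1}^n \langle X_i, f_k \rangle_H \mathbbm{1}_{(t_i,t_{i+1}]},
\]
whose coefficients $\langle X_i, f_k \rangle_H$ are $\mathcal{F}_{t_i}$-measurable. Invoking the well-known formula for the real-valued It\^{o} integral of a simple process then yields
\[
\langle X, f_k \rangle_H \bullet M = \sum_{i=1}^n \langle X_i, f_k \rangle_H \big( M^{t_{i+1}} - M^{t_i} \big).
\]

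Substituting this into (\ref{def-Ito-H}), and using that the sum over $i$ is finite, I would interchange the finite sum over $i$ with the series over $k$ to obtain
\[
X \bullet M = \sum_{k \in \mathbb{N}} \bigg( \sum_{i=1}^n \langle X_i, f_k \rangle_H \big( M^{t_{i+1}} - M^{t_i} \big) \bigg) f_k = \sum_{i=1}^n \big( M^{t_{i+1}} - M^{t_i} \big) \sum_{k \in \mathbb{N}} \langle X_i, f_k \rangle_H f_k,
\]
and the inner series equals $X_i$ by the orthonormal basis expansion in $H$. This delivers the claimed identity $X \bullet M = \sum_{i=1}^n X_i (M^{t_{i+1}} - M^{t_i})$.

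The step requiring care is the interchange of summations, which must be justified in the norm of $M_T^2(H)$ rather than merely pointwise. Because the index $i$ ranges over a finite set, it suffices to show that for each fixed $i$ the partial sums $\sum_{k=1}^N \langle X_i, f_k \rangle_H (M^{t_{i+1}} - M^{t_i}) f_k$ converge to $X_i (M^{t_{i+1}} - M^{t_i})$ in $M_T^2(H)$. I would first verify that $X_i (M^{t_{i+1}} - M^{t_i})$ is genuinely a square-integrable $H$-valued martingale, using that $X_i$ is $\mathcal{F}_{t_i}$-measurable while $M^{t_{i+1}} - M^{t_i}$ is a real martingale, and then bound the $M_T^2(H)$-norm of the tail by $\mathbb{E}\big[ \| X_i - \sum_{k=1}^N \langle X_i, f_k \rangle_H f_k \|_H^2 \, | M^{t_{i+1}}_T - M^{t_i}_T |^2 \big]$, which tends to $0$ by dominated convergence. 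This is the only genuine obstacle; the remaining manipulations are routine.
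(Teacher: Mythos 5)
Your proof is correct and takes essentially the same route as the paper: expand each $\langle X, f_k \rangle_H$ as a real-valued simple process, apply the real-valued It\^{o} formula for simple integrands, and interchange the finite sum over $i$ with the series over $k$, recombining via the orthonormal basis expansion. The paper performs that interchange without comment, so your dominated-convergence argument showing $\sum_{k=1}^N \langle X_i, f_k \rangle_H \big( M^{t_{i+1}} - M^{t_i} \big) f_k \rightarrow X_i \big( M^{t_{i+1}} - M^{t_i} \big)$ in $M_T^2(H)$ is a correct additional refinement rather than a different approach.
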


\begin{proof}
Let $(f_k)_{k \in \mathbb{N}}$ be an orthonormal basis of $H$. Then, for each $k \in \mathbb{N}$ the process $\langle X,f_k \rangle$ is a real-valued simple process with representation
\begin{align*}
\langle X,f_k \rangle_H = \langle X_0,f_k \rangle_H \mathbbm{1}_{\{0\}} + \sum_{i=1}^n \langle X_i,f_k \rangle_H \mathbbm{1}_{(t_i,t_{i+1}]}
\end{align*}
Thus, by the definition of the real-valued It\^{o} integral for simple processes we obtain
\begin{align*}
X \bullet M &= \sum_{k \in \mathbb{N}} \big( \langle X, f_k \rangle_H \bullet M \big) f_k = \sum_{k \in \mathbb{N}} \bigg( \sum_{i=1}^n \langle X_i, f_k \rangle_H (M^{t_{i+1}} - M^{t_i}) \bigg) f_k
\\ &= \sum_{i=1}^n \bigg( \sum_{k \in \mathbb{N}} \langle X_i, f_k \rangle_H f_k \bigg) (M^{t_{i+1}} - M^{t_i}) = \sum_{i=1}^n X_i (M^{t_{i+1}} - M^{t_i}),
\end{align*}
finishing the proof.
\end{proof}

\begin{lemma}\label{lemma-I-conv}
Let $X$ be a $H$-valued, predictable process satisfying (\ref{int-fk-exists}). Then, for every orthonormal basis $(f_k)_{k \in \mathbb{N}}$ of $H$ we have
\begin{align*}
\sum_{k=1}^{\infty} | \langle X,f_k \rangle_H |^2 \bullet \langle M, M \rangle = \| X \|_H^2 \bullet \langle M, M \rangle,
\end{align*}
where the convergence takes place in $A_T^1(\mathbb{R})$.
\end{lemma}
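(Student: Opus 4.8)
The plan is to reduce the asserted convergence in $A_T^1(\mathbb{R})$---whose norm is $\| Y \|_{A_T^1(\mathbb{R})} = \mathbb{E}[\sup_{t \in [0,T]} |Y_t|]$---to a single application of Lebesgue's dominated convergence theorem, exploiting the monotonicity that is built into the situation. First I would fix an orthonormal basis $(f_k)_{k \in \mathbb{N}}$ and introduce the nonnegative predictable integrands $Y_s^n := \sum_{k=1}^n | \langle X_s, f_k \rangle_H |^2$ and $Y_s := \| X_s \|_H^2$. By Parseval's identity we have $Y_s^n \uparrow Y_s$ pointwise in $(s,\omega)$ as $n \rightarrow \infty$, since each summand is nonnegative. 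In particular, each of the processes $Y^n \bullet \langle M,M \rangle$ and $Y \bullet \langle M,M \rangle$ is a well-defined, adapted, c\`{a}dl\`{a}g, real-valued process, and each lies in $A_T^1(\mathbb{R})$ because of assumption (\ref{int-fk-exists}), which says precisely that $\mathbb{E}[ \int_0^T Y_s \, d\langle M,M \rangle_s ] < \infty$.

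The key observation is that the difference process
\begin{align*}
D_t^n := \int_0^t \big( Y_s - Y_s^n \big) \, d\langle M,M \rangle_s
\end{align*}
has a nonnegative integrand and is integrated against the nondecreasing process $\langle M,M \rangle$; hence $D^n$ is itself nonnegative and nondecreasing in $t$, so that $\sup_{t \in [0,T]} |D_t^n| = D_T^n$ almost surely. This collapses the supremum norm to an evaluation at the terminal time and yields
\begin{align*}
\big\| Y^n \bullet \langle M,M \rangle - Y \bullet \langle M,M \rangle \big\|_{A_T^1(\mathbb{R})} = \mathbb{E} \bigg[ \int_0^T \big( Y_s - Y_s^n \big) \, d\langle M,M \rangle_s \bigg].
\end{align*}

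Finally I would conclude as follows. For each fixed $\omega$, the monotone convergence theorem applied to the measure $d\langle M,M \rangle_s(\omega)$ gives $\int_0^T ( Y_s - Y_s^n ) \, d\langle M,M \rangle_s \downarrow 0$; moreover this random variable is dominated by the integrable majorant $\int_0^T Y_s \, d\langle M,M \rangle_s$, so its expectation tends to $0$ by dominated convergence. This establishes the claimed convergence in $A_T^1(\mathbb{R})$, and since the limit $\| X \|_H^2 \bullet \langle M,M \rangle$ does not depend on the enumeration of the basis, it also yields the stated identity. The only point requiring a little care is the reduction of the sup-norm to the endpoint $t = T$, which hinges on the monotonicity of both the integrand difference and the integrator; apart from that the argument is routine, the integrability needed for the dominating function being supplied directly by (\ref{int-fk-exists}).
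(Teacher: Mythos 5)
Your proof is correct and takes essentially the same route as the paper's: both reduce the $A_T^1(\mathbb{R})$-norm of the difference to the terminal value $\mathbb{E}\big[ \int_0^T \sum_{k=n+1}^{\infty} |\langle X_s, f_k \rangle_H|^2 \, d\langle M,M \rangle_s \big]$ by using that the tail integrand is nonnegative and $\langle M,M \rangle$ is nondecreasing, and then conclude by dominated convergence with the majorant $\int_0^T \| X_s \|_H^2 \, d\langle M,M \rangle_s$, which is integrable by (\ref{int-fk-exists}). The only cosmetic difference is that you spell out the sup-collapsing step explicitly, whereas the paper performs it silently within its chain of equalities.
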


\begin{proof}
We define the integral process
\begin{align}\label{def-I}
\mathbb{I} := \| X \|_H^2 \bullet \langle M, M \rangle
\end{align}
and the sequence $(\mathbb{I}^n)_{n \in \mathbb{N}}$ of partial sums by
\begin{align}\label{def-I-n}
\mathbb{I}^n := \sum_{k=1}^n | \langle X,f_k \rangle_H |^2 \bullet \langle M, M \rangle.
\end{align}
By (\ref{int-fk-exists}) we have $\mathbb{I} \in A_T^1(\mathbb{R})$ and $(\mathbb{I}^n)_{n \in \mathbb{N}} \subset A_T^1(\mathbb{R})$.
Furthermore, by Lebesgue's dominated convergence theorem we have
\begin{align*}
\| \mathbb{I} - \mathbb{I}^n \|_{L_T^1(\mathbb{R})} &= \mathbb{E} \bigg[ \sup_{t \in [0,T]} | \mathbb{I}_t - \mathbb{I}_t^n | \bigg] = \mathbb{E} \Bigg[ \sup_{t \in [0,T]} \bigg| \int_0^t \sum_{k=n+1}^{\infty} |\langle X_s,f_k \rangle|^2 d \langle M, M \rangle_s \bigg| \Bigg]
\\ &= \mathbb{E} \bigg[ \int_0^T \sum_{k=n+1}^{\infty} |\langle X_s,f_k \rangle|^2 d \langle M, M \rangle_s \bigg] \rightarrow 0 \quad \text{for $n \rightarrow \infty$,}
\end{align*}
which concludes the proof.
\end{proof}

\begin{remark}\label{remark-cov-prod}
As a consequence of the Doob-Meyer decomposition theorem, for two square-integrable martingales $X,Y \in \mathcal{M}_T^2(H)$ there exists a (up to indistinguishability) unique real-valued, predictable process $\langle X,Y \rangle$ with finite variation paths and $\langle X,Y \rangle_0 = 0$ such that $\langle X,Y \rangle_H - \langle X,Y \rangle$ is a martingale.
\end{remark}

\begin{proposition}\label{prop-variation}
For every $H$-valued, predictable process $X$ satisfying (\ref{int-fk-exists}) we have
\begin{align*}
\langle X \bullet M, X \bullet M \rangle = \| X \|_H^2 \bullet \langle M,M \rangle.
\end{align*}
\end{proposition}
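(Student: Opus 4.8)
The plan is to identify $\| X \|_H^2 \bullet \langle M,M \rangle$ as the unique compensator of $\| X \bullet M \|_H^2$ described in Remark~\ref{remark-cov-prod}, and then to invoke the uniqueness asserted there. Write $\mathbb{J} := X \bullet M \in M_T^2(H)$ and fix an orthonormal basis $(f_k)_{k \in \mathbb{N}}$ of $H$. By the definition~(\ref{def-Ito-H}) we have $\langle \mathbb{J},f_k \rangle_H = \langle X,f_k \rangle_H \bullet M$ for each $k$, so that $\| \mathbb{J} \|_H^2 = \sum_{k=1}^{\infty} ( \langle X,f_k \rangle_H \bullet M )^2$. For every $k$ the real-valued theory tells us that $\langle X,f_k \rangle_H \bullet M$ is a square-integrable martingale whose quadratic variation equals $| \langle X,f_k \rangle_H |^2 \bullet \langle M,M \rangle$; consequently
\begin{align*}
N^k := ( \langle X,f_k \rangle_H \bullet M )^2 - | \langle X,f_k \rangle_H |^2 \bullet \langle M,M \rangle
\end{align*}
is a real-valued martingale, and by Doob's inequality together with~(\ref{int-fk-exists}) it belongs to $M_T^1(\mathbb{R})$, as do its finite partial sums.

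First I would show that the partial sums $\sum_{k=1}^n N^k$ converge in $A_T^1(\mathbb{R})$ to $\| \mathbb{J} \|_H^2 - \| X \|_H^2 \bullet \langle M,M \rangle$. The integrator part is immediate from Lemma~\ref{lemma-I-conv}, which gives $\sum_{k=1}^n | \langle X,f_k \rangle_H |^2 \bullet \langle M,M \rangle \rightarrow \| X \|_H^2 \bullet \langle M,M \rangle$ in $A_T^1(\mathbb{R})$. For the remaining part, I note that the partial sum $S^n := \sum_{k=1}^n ( \langle X,f_k \rangle_H \bullet M ) f_k$ satisfies $\| S^n \|_H^2 = \sum_{k=1}^n ( \langle X,f_k \rangle_H \bullet M )^2$ and converges to $\mathbb{J}$ in $M_T^2(H)$ by Proposition~\ref{prop-int-well-def}. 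Using the elementary estimate $| \|a\|_H^2 - \|b\|_H^2 | \leq \|a-b\|_H ( \|a\|_H + \|b\|_H )$ together with the Cauchy-Schwarz inequality, one gets $\mathbb{E}[ \sup_{t \in [0,T]} | \| S_t^n \|_H^2 - \| \mathbb{J}_t \|_H^2 | ] \rightarrow 0$, that is, $\| S^n \|_H^2 \rightarrow \| \mathbb{J} \|_H^2$ in $A_T^1(\mathbb{R})$.

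Since each $\sum_{k=1}^n N^k$ lies in $M_T^1(\mathbb{R})$, and $M_T^1(\mathbb{R})$ is closed in $A_T^1(\mathbb{R})$ by Lemma~\ref{lemma-closed}, the limit $\| \mathbb{J} \|_H^2 - \| X \|_H^2 \bullet \langle M,M \rangle$ is again a martingale. It then remains to check that $\| X \|_H^2 \bullet \langle M,M \rangle$ has the defining properties of the compensator: it vanishes at $0$, it is non-decreasing and hence of finite variation, and it is predictable, being the Lebesgue-Stieltjes integral of the predictable process $\| X \|_H^2$ against the predictable, non-decreasing process $\langle M,M \rangle$. By the uniqueness statement in Remark~\ref{remark-cov-prod}, this identifies $\langle \mathbb{J},\mathbb{J} \rangle = \| X \|_H^2 \bullet \langle M,M \rangle$, which is the assertion.

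The main obstacle I anticipate is the passage to the limit in the first step, and in particular the justification of the $A_T^1(\mathbb{R})$-convergence of $\| S^n \|_H^2$ to $\| \mathbb{J} \|_H^2$ uniformly in time; once this norm continuity is in place, the proof reduces to combining the two convergence results already available (Proposition~\ref{prop-int-well-def} and Lemma~\ref{lemma-I-conv}) with the closedness from Lemma~\ref{lemma-closed} and the uniqueness in Remark~\ref{remark-cov-prod}.
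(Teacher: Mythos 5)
Your proposal is correct and follows essentially the same route as the paper's own proof: the same decomposition into partial sums $\sum_{k=1}^n \big( | \langle X,f_k \rangle_H \bullet M |^2 - | \langle X,f_k \rangle_H |^2 \bullet \langle M,M \rangle \big)$, the same appeals to Proposition~\ref{prop-int-well-def}, Lemma~\ref{lemma-I-conv}, the real-valued quadratic variation identity, and the closedness of $M_T^1(\mathbb{R})$ in $A_T^1(\mathbb{R})$ from Lemma~\ref{lemma-closed}, concluding via the uniqueness in Remark~\ref{remark-cov-prod}. Your final step, where you explicitly verify that $\| X \|_H^2 \bullet \langle M,M \rangle$ is predictable, non-decreasing and vanishes at $0$ before invoking uniqueness, is in fact slightly more complete than the paper, which leaves this identification implicit.
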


\begin{proof}
Let $(f_k)_{k \in \mathbb{N}}$ be an orthonormal basis of $H$. We define the process $\mathbb{J} := X \bullet M$
and the sequence $(\mathbb{J}^n)_{n \in \mathbb{N}}$ of partial sums by
\begin{align*}
\mathbb{J}^n := \sum_{k=1}^n \big( \langle X,f_k \rangle_H \bullet M \big) f_k.
\end{align*}
By Proposition~\ref{prop-int-well-def} we have 
\begin{align}\label{J-conv}
\mathbb{J}^n \rightarrow \mathbb{J} \quad \text{in $M_T^2(H)$.}
\end{align}
Defining the integral process $\mathbb{I}$ by (\ref{def-I})
and the sequence $(\mathbb{I}^n)_{n \in \mathbb{N}}$ of partial sums by (\ref{def-I-n}), using Lemma~\ref{lemma-I-conv} we have 
\begin{align}\label{I-conv}
\mathbb{I}^n \rightarrow \mathbb{I} \quad \text{in $A_T^1(\mathbb{R})$.} 
\end{align}
Furthermore, we define the process $M \in A_T^1(\mathbb{R})$ and the sequence $(M^n)_{n \in \mathbb{N}} \subset A_T^1(\mathbb{R})$ as
\begin{align*}
M &:= \| \mathbb{J} \|_H^2 - \mathbb{I},
\\ M^n &:= \| \mathbb{J}^n \|_H^2 - \mathbb{I}^n, \quad n \in \mathbb{N}.
\end{align*}
Then we have $(M^n)_{n \in \mathbb{N}} \subset M_T^1(\mathbb{R})$. Indeed, for each $n \in \mathbb{N}$ we have
\begin{align*}
M^n &= \bigg\| \sum_{k=1}^n \big( \langle X,f_k \rangle_H \bullet M \big) f_k \bigg\|_H^2 - \sum_{k=1}^n | \langle X,f_k \rangle_H |^2 \bullet \langle M,M \rangle
\\ &= \sum_{k=1}^n \big\| \big( \langle X,f_k \rangle_H \bullet M \big) f_k \big\|_H^2 - \sum_{k=1}^n | \langle X,f_k \rangle_H |^2 \bullet \langle M,M \rangle
\\ &= \sum_{k=1}^n \big( | \langle X,f_k \rangle_H \bullet M |^2 - | \langle X,f_k \rangle_H |^2 \bullet \langle M,M \rangle \big).
\end{align*}
For every $k \in \mathbb{N}$ the quadratic variation of the real-valued process $\langle X,f_k \rangle_H \bullet M$ is given by
\begin{align*}
\langle \langle X,f_k \rangle_H \bullet M, \langle X,f_k \rangle_H \bullet M \rangle = | \langle X,f_k \rangle_H |^2 \bullet \langle M,M \rangle,
\end{align*}
see, e.g. \cite[Thm.~I.4.40.d]{Jacod-Shiryaev}, which shows that $M^n$ is a martingale. Since $M^n \in A_T^1(\mathbb{R})$, we deduce that $M^n \in M_T^1(\mathbb{R})$.

Next, we prove that $M^n \rightarrow M$ in $A_T^1(\mathbb{R})$. Indeed, since
\begin{align*}
| \, \| \mathbb{J} \|_H^2 - \| \mathbb{J}^n \|_H^2 \, | \leq \| \mathbb{J} - \mathbb{J}^n \|_H^2 + 2 \| \mathbb{J} \|_H \| \mathbb{J} - \mathbb{J}^n \|_H,
\end{align*}
by the Cauchy-Schwarz inequality and (\ref{J-conv}) we obtain
\begin{align*}
&\| \,  \| \mathbb{J} \|_H^2 - \| \mathbb{J}^n \|_H^2 \, \|_{L_T^1(\mathbb{R})} = \mathbb{E} \bigg[ \sup_{t \in [0,T]} | \, \| \mathbb{J}_t \|_H^2 - \| \mathbb{J}_t^n \|_H^2 \, | \bigg]
\\ &\leq \mathbb{E} \bigg[ \sup_{t \in [0,T]} \| \mathbb{J}_t - \mathbb{J}_t^n \|_H^2 \bigg] + 2 \mathbb{E} \bigg[ \sup_{t \in [0,T]} \| \mathbb{J}_t \|_H \| \mathbb{J}_t - \mathbb{J}_t^n \|_H \bigg]
\\ &\leq \mathbb{E} \bigg[ \sup_{t \in [0,T]} \| \mathbb{J}_t - \mathbb{J}_t^n \|_H^2 \bigg] + 2 \mathbb{E} \bigg[ \sup_{t \in [0,T]} \| \mathbb{J}_t \|_H^2 \bigg]^{1/2} \mathbb{E} \bigg[ \sup_{t \in [0,T]} \| \mathbb{J}_t - \mathbb{J}_t^n \|_H^2 \bigg]^{1/2}
\\ &= \| \mathbb{J} - \mathbb{J}^n \|_{L_T^2(H)}^2 + 2 \| \mathbb{J} \|_{L_T^2(H)} \| \mathbb{J} - \mathbb{J}^n \|_{L_T^2(H)} \rightarrow 0.
\end{align*}
Therefore, together with (\ref{I-conv}) we get
\begin{align*}
\| M - M^n \|_{L_T^1(\mathbb{R})} \leq \| \,  \| \mathbb{J} \|^2 - \| \mathbb{J}^n \|^2 \, \|_{L_T^1(\mathbb{R})} + \| \mathbb{I} - \mathbb{I}^n \|_{L_T^1(\mathbb{R})} \rightarrow 0,
\end{align*}
showing that $M^n \rightarrow M$ in $A_T^1(\mathbb{R})$. Now, Lemma~\ref{lemma-closed} yields that $M \in M_T^1(\mathbb{R})$, which concludes the proof.
\end{proof}

\begin{theorem}\label{thm-cov}
Let $N \in \mathcal{M}_T^2(\mathbb{R})$ be another square-integrable martingale, and let $X,Y$ be two $H$-valued, predictable processes satisfying (\ref{int-fk-exists}) and
\begin{align}\label{Y-int}
\mathbb{E} \bigg[ \int_0^T \| Y_s \|_H^2 d \langle N,N \rangle_s \bigg] < \infty.
\end{align} 
Then we have
\begin{align}\label{cov-id}
\langle X \bullet M, Y \bullet N \rangle = \langle X, Y \rangle_H \bullet \langle M, N \rangle.
\end{align}
\end{theorem}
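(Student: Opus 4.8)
The plan is to follow the template of the proof of Proposition~\ref{prop-variation} almost verbatim, replacing the diagonal quantities by their bilinear (``cross'') analogues and reducing everything to the real-valued covariation identity componentwise. Since $X \bullet M, Y \bullet N \in M_T^2(H)$ by Proposition~\ref{prop-int-well-def}, the covariation $\langle X \bullet M, Y \bullet N \rangle$ exists and is characterised, via Remark~\ref{remark-cov-prod}, as the unique real-valued predictable process with finite variation paths, vanishing at $0$, that turns $\langle X \bullet M, Y \bullet N \rangle_H - \langle X \bullet M, Y \bullet N \rangle$ into a martingale. Hence it suffices to produce a process with these properties; the natural candidate is $\mathbb{I} := \langle X, Y \rangle_H \bullet \langle M, N \rangle$, which is predictable, of finite variation and starts at $0$, and whose finiteness in $A_T^1(\mathbb{R})$ I would obtain from the Kunita--Watanabe inequality together with (\ref{int-fk-exists}) and (\ref{Y-int}).

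Fixing an orthonormal basis $(f_k)_{k \in \mathbb{N}}$, I would introduce the partial sums $\mathbb{J}_X^n := \sum_{k=1}^n (\langle X, f_k\rangle_H \bullet M) f_k$ and $\mathbb{J}_Y^n := \sum_{k=1}^n (\langle Y, f_k\rangle_H \bullet N) f_k$, which converge in $M_T^2(H)$ to $X \bullet M$ and $Y \bullet N$ by Proposition~\ref{prop-int-well-def}, together with $\mathbb{I}^n := \sum_{k=1}^n \langle X, f_k\rangle_H \langle Y, f_k\rangle_H \bullet \langle M, N\rangle$. Using orthonormality of $(f_k)_{k \in \mathbb{N}}$, the inner product $\langle \mathbb{J}_X^n, \mathbb{J}_Y^n\rangle_H$ collapses to the single sum $\sum_{k=1}^n (\langle X, f_k\rangle_H \bullet M)(\langle Y, f_k\rangle_H \bullet N)$. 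The key ingredient is then the real-valued covariation identity $\langle \phi \bullet M, \psi \bullet N\rangle = \phi\psi \bullet \langle M, N\rangle$ for scalar integrands (the cross version of \cite[Thm.~I.4.40]{Jacod-Shiryaev} already used in Proposition~\ref{prop-variation}), which shows that each summand of $\mathbb{K}^n := \langle \mathbb{J}_X^n, \mathbb{J}_Y^n\rangle_H - \mathbb{I}^n$ is a martingale; since $\mathbb{K}^n \in A_T^1(\mathbb{R})$, it follows that $\mathbb{K}^n \in M_T^1(\mathbb{R})$.

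It then remains to pass to the limit and invoke closedness. On the one hand, $\langle \mathbb{J}_X^n, \mathbb{J}_Y^n\rangle_H \to \langle X \bullet M, Y \bullet N\rangle_H$ in $L_T^1(\mathbb{R})$, exactly as in Proposition~\ref{prop-variation}, by bounding the difference through $\|\mathbb{J}_X - \mathbb{J}_X^n\|_H\|\mathbb{J}_Y\|_H + \|\mathbb{J}_X^n\|_H\|\mathbb{J}_Y - \mathbb{J}_Y^n\|_H$, the Cauchy--Schwarz inequality and the $M_T^2(H)$-convergence of the partial sums. On the other hand, $\mathbb{I}^n \to \mathbb{I}$ in $A_T^1(\mathbb{R})$, the cross-analogue of Lemma~\ref{lemma-I-conv}. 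Combining these, $\mathbb{K}^n \to \mathbb{K} := \langle X \bullet M, Y \bullet N\rangle_H - \mathbb{I}$ in $A_T^1(\mathbb{R})$, so Lemma~\ref{lemma-closed} gives $\mathbb{K} \in M_T^1(\mathbb{R})$. By the uniqueness in Remark~\ref{remark-cov-prod} this identifies $\mathbb{I}$ with $\langle X \bullet M, Y \bullet N\rangle$, which is (\ref{cov-id}).

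The main obstacle, and the only genuine departure from Proposition~\ref{prop-variation}, is the convergence $\mathbb{I}^n \to \mathbb{I}$. In Lemma~\ref{lemma-I-conv} the integrator $\langle M, M\rangle$ is non-decreasing, so dominated convergence applies at once to the non-negative tails; here $\langle M, N\rangle$ is only of finite variation and may change sign, so one cannot integrate term by term against a positive measure. The fix is again the Kunita--Watanabe inequality: writing $R^n_s := \langle X_s, Y_s\rangle_H - \sum_{k=1}^n \langle X_s, f_k\rangle_H\langle Y_s, f_k\rangle_H$ and bounding $|R^n_s| \le a^n_s b^n_s$ with $a^n_s := (\sum_{k>n}|\langle X_s, f_k\rangle_H|^2)^{1/2}$ and $b^n_s := (\sum_{k>n}|\langle Y_s, f_k\rangle_H|^2)^{1/2}$, one estimates $\mathbb{E}[\sup_t |\int_0^t R^n_s\, d\langle M, N\rangle_s|] \le \mathbb{E}[\int_0^T a^n_s b^n_s\, d|\langle M, N\rangle|_s]$, applies Kunita--Watanabe and then the Cauchy--Schwarz inequality in expectation to dominate this by $\mathbb{E}[\int_0^T (a^n_s)^2 d\langle M,M\rangle_s]^{1/2}\,\mathbb{E}[\int_0^T (b^n_s)^2 d\langle N,N\rangle_s]^{1/2}$. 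Since $(a^n_s)^2 \le \|X_s\|_H^2$ and $(b^n_s)^2 \le \|Y_s\|_H^2$ decrease to $0$ pointwise, dominated convergence using (\ref{int-fk-exists}) and (\ref{Y-int}) forces both factors to vanish, giving $\mathbb{I}^n \to \mathbb{I}$ and, along the way, $\mathbb{I} \in A_T^1(\mathbb{R})$.
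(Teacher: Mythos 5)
Your proof is correct, and it takes a genuinely different route from the paper's. The paper proves Theorem~\ref{thm-cov} in one stroke: it invokes Proposition~\ref{prop-variation} together with the polarization identities $\langle x,y \rangle_H = \tfrac{1}{4}(\|x+y\|_H^2 - \|x-y\|_H^2)$ and $\langle M,N \rangle = \tfrac{1}{4}(\langle M+N,M+N \rangle - \langle M-N,M-N \rangle)$, and leaves the rest as a ``straightforward calculation''. You instead rerun the proof of Proposition~\ref{prop-variation} directly for the cross terms: partial sums $\mathbb{J}_X^n$, $\mathbb{J}_Y^n$, collapse of $\langle \mathbb{J}_X^n, \mathbb{J}_Y^n \rangle_H$ to a single sum by orthonormality, the real-valued covariation identity $\langle \phi \bullet M, \psi \bullet N \rangle = \phi\psi \bullet \langle M,N \rangle$, convergence in $A_T^1(\mathbb{R})$, closedness (Lemma~\ref{lemma-closed}), and the uniqueness statement of Remark~\ref{remark-cov-prod}. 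Your one genuinely new ingredient is the Kunita--Watanabe inequality, which correctly replaces the monotone-convergence argument of Lemma~\ref{lemma-I-conv}: since $d\langle M,N \rangle$ is signed, the estimate $\mathbb{E}[\int_0^T a_s^n b_s^n \, d|\langle M,N \rangle|_s] \leq \mathbb{E}[\int_0^T (a_s^n)^2 \, d\langle M,M \rangle_s]^{1/2} \, \mathbb{E}[\int_0^T (b_s^n)^2 \, d\langle N,N \rangle_s]^{1/2}$ is exactly what is needed, and your dominated-convergence conclusion from (\ref{int-fk-exists}) and (\ref{Y-int}) is sound. Comparing the two approaches: the polarization argument is far shorter, but read literally it has two costs that your argument avoids. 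First, it manipulates objects such as $X \bullet (M \pm N)$ or $Y \bullet M$, whose very definition requires integrability conditions (for instance $\mathbb{E}[\int_0^T \|X_s\|_H^2 \, d\langle N,N \rangle_s] < \infty$) that are not among the hypotheses (\ref{int-fk-exists}) and (\ref{Y-int}). Second, bilinearity of the bracket plus Proposition~\ref{prop-variation} applied to such combinations only delivers the symmetrized identity $\langle X \bullet M, Y \bullet N \rangle + \langle X \bullet N, Y \bullet M \rangle = 2 \, \langle X,Y \rangle_H \bullet \langle M,N \rangle$, because quadratic expressions cannot separate the antisymmetric part $\langle X \bullet M, Y \bullet N \rangle - \langle X \bullet N, Y \bullet M \rangle$; identifying that part as zero is essentially the content of (\ref{cov-id}) itself and needs an argument like yours (or an approximation by simple processes, which this paper deliberately avoids). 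Your componentwise proof is longer and imports one tool the paper never uses, but it works verbatim under the stated hypotheses, yields the full (unsymmetrized) identity, and contains Proposition~\ref{prop-variation} as the special case $Y = X$, $N = M$ without any circularity, since it only uses the template of that proof, not its conclusion.
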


\begin{proof}
Using Proposition~\ref{prop-variation} and the identities
\begin{align*}
\langle x,y \rangle_H &= \frac{1}{4} \big( \| x+y \|_H^2 - \| x-y \|_H^2 \big), \quad x,y \in H,
\\ \langle M,N \rangle &= \frac{1}{4} \big( \langle M+N,M+N \rangle - \langle M-N, M-N \rangle \big),
\end{align*}
identity (\ref{cov-id}) follows from a straightforward calculation.
\end{proof}

\begin{proposition}\label{prop-cor-integrals}
Let $N \in \mathcal{M}_T^2(\mathbb{R})$ be another square-integrable martingale such that $\langle M,N \rangle = 0$, and let $X,Y$ be two $H$-valued, predictable processes satisfying (\ref{int-fk-exists}) and (\ref{Y-int}).
Then we have
\begin{align*}
\langle X \bullet M, Y \bullet N \rangle_{M_T^2(H)} = 0.
\end{align*}
\end{proposition}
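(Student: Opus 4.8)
\emph{The plan} is to reduce the assertion to the covariation identity already established in Theorem~\ref{thm-cov}. Set $A := X \bullet M$ and $B := Y \bullet N$; by the constructions of the preceding sections both belong to $M_T^2(H)$. Applying Theorem~\ref{thm-cov} to the pair $(X,Y)$ and the martingales $(M,N)$ gives the covariation identity $\langle A, B \rangle = \langle X, Y \rangle_H \bullet \langle M, N \rangle$, where the bracket on the left is the $H$-valued covariation process from Remark~\ref{remark-cov-prod}. Since by hypothesis $\langle M, N \rangle = 0$, the integrator of the right-hand side vanishes, and hence $\langle A, B \rangle \equiv 0$ up to indistinguishability. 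This is the first and conceptually main step: the whole proposition hinges on pushing the orthogonality assumption $\langle M, N \rangle = 0$ through Theorem~\ref{thm-cov}.

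\emph{The second step} is to translate the vanishing of the covariation \emph{process} into the vanishing of the $M_T^2(H)$ \emph{inner product}. By definition of the inner product on $M_T^2(H)$ one has $\langle A, B \rangle_{M_T^2(H)} = \mathbb{E}[\langle A_T, B_T \rangle_H]$, so it suffices to show $\mathbb{E}[\langle A_T, B_T \rangle_H] = 0$. To this end I would invoke the defining property of the covariation process from Remark~\ref{remark-cov-prod}: the real-valued process $\langle A_t, B_t \rangle_H - \langle A, B \rangle_t$ is a martingale, and it is null at $t = 0$ because the It\^{o} integrals $A = X \bullet M$ and $B = Y \bullet N$ start at the origin (as is visible already from the formula for simple integrands in Proposition~\ref{prop-simple-1}, and hence by the series definition in general), so that $\langle A_0, B_0 \rangle_H = 0$ and $\langle A, B \rangle_0 = 0$.

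\emph{Combining} these observations, taking expectations in the martingale property at $t = T$ yields $\mathbb{E}[\langle A_T, B_T \rangle_H] = \mathbb{E}[\langle A, B \rangle_T]$, and since $\langle A, B \rangle \equiv 0$ from the first step we conclude $\mathbb{E}[\langle A_T, B_T \rangle_H] = 0$, which is exactly $\langle X \bullet M, Y \bullet N \rangle_{M_T^2(H)} = 0$.

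\emph{The main obstacle} I anticipate is not analytic but bookkeeping: one must be careful to distinguish the two meanings of the bracket symbol (the covariation \emph{process} of Remark~\ref{remark-cov-prod} versus the \emph{inner product} on $M_T^2(H)$) and to justify cleanly that the integrability hypotheses (\ref{int-fk-exists}) and (\ref{Y-int}) guarantee $A, B \in M_T^2(H)$, so that the covariation process in Remark~\ref{remark-cov-prod} is well-defined and its compensating martingale is genuinely integrable. Once the starting-point-zero of the integrals is noted, the passage from a vanishing covariation process to the vanishing expectation is immediate.
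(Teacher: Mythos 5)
Your proposal is correct and follows essentially the same route as the paper's proof: both reduce the $M_T^2(H)$ inner product $\mathbb{E}[\langle A_T, B_T \rangle_H]$ to the expectation of the covariation process via the martingale property in Remark~\ref{remark-cov-prod}, and then kill that covariation using Theorem~\ref{thm-cov} together with the hypothesis $\langle M,N \rangle = 0$. The paper merely compresses these steps into a single chain of equalities, while you spell out the null-at-zero justification explicitly; the content is identical.
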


\begin{proof}
Using Remark~\ref{remark-cov-prod}, Theorem~\ref{thm-cov} and the hypothesis $\langle M,N \rangle = 0$ we obtain
\begin{align*}
&\langle X \bullet M, Y \bullet N \rangle_{M_T^2(H)} = \mathbb{E} \bigg[ \Big\langle \int_0^T X_s dM_s, \int_0^T Y_s dN_s \Big\rangle_H \bigg] 
\\ &= \mathbb{E} \bigg[ \Big\langle \int_0^T X_s dM_s, \int_0^T Y_s dN_s \Big\rangle \bigg] = \mathbb{E} \bigg[ \int_0^T \langle X_s, Y_s \rangle_H d \langle M, N \rangle_s \bigg] = 0,
\end{align*}
completing the proof.
\end{proof}

\section{The It\^{o} integral with respect to a sequence of standard L\'{e}vy processes}\label{sec-step-2}

In this section, we introduce the It\^{o} integral for $\ell^2(H)$-valued processes with respect to a sequence of standard L\'{e}vy processes, which is based on the It\^{o} integral (\ref{def-Ito-H}) from the previous section.
We define the space of sequences
\begin{align*}
\ell^2(H) := \bigg\{ (h^j)_{j \in \mathbb{N}} \subset H : \sum_{j=1}^{\infty} \| h^j \|_H^2 < \infty \bigg\},
\end{align*}
which, equipped with the inner product
\begin{align*}
\langle h,g \rangle_{\ell^2(H)} = \sum_{j=1}^{\infty} \langle h^j,g^j \rangle_H
\end{align*}
is a separable Hilbert space. 

\begin{definition}
A sequence $(M^j)_{j \in \mathbb{N}}$ of real-valued L\'{e}vy processes is called a \emph{sequence of standard L\'{e}vy processes} if it consists of square-integrable martingales with $\langle M^j,M^k \rangle_t = \delta_{jk} \cdot t$ for all $j,k \in \mathbb{N}$. Here $\delta_{jk}$ denotes the Kronecker delta
\begin{align*}
\delta_{jk} =
\begin{cases}
1, & \text{if } j = k,
\\ 0, & \text{if } j \neq k.
\end{cases}
\end{align*}
\end{definition}

For the rest of this section, let $(M^j)_{j \in \mathbb{N}}$ be a sequence of standard L\'{e}vy processes.

\begin{proposition}\label{prop-int-conv-2}
For every $\ell^2(H)$-valued, predictable process $X$ with
\begin{align}\label{int-cond-2}
\mathbb{E} \bigg[ \int_0^T \| X_s \|_{\ell^2(H)}^2 ds \bigg] < \infty
\end{align}
the series 
\begin{align}\label{series-int-2}
\sum_{j \in \mathbb{N}} X^j \bullet M^j
\end{align}
converges unconditionally in $M_T^2(H)$.
\end{proposition}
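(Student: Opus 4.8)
The plan is to apply Lemma~\ref{lemma-series-Hilbert} in the Hilbert space $M_T^2(H)$: since the summands $X^j \bullet M^j$ are elements of $M_T^2(H)$, once their pairwise orthogonality is established, unconditional convergence of the series (\ref{series-int-2}) will be equivalent to square-summability of their norms. Thus the argument splits into checking well-definedness, establishing orthogonality, and estimating $\sum_{j} \| X^j \bullet M^j \|_{M_T^2(H)}^2$.

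First I would verify that each integral $X^j \bullet M^j$ is defined. Each component $X^j$ is $H$-valued and predictable, being the image of $X$ under the continuous coordinate projection $\ell^2(H) \to H$. Because $(M^j)_{j \in \mathbb{N}}$ is a sequence of standard L\'{e}vy processes, we have $\langle M^j, M^j \rangle_s = s$, so condition (\ref{int-fk-exists}) for $X^j$ reduces to $\mathbb{E} [ \int_0^T \| X^j_s \|_H^2 \, ds ] < \infty$; this follows from the pointwise bound $\| X^j_s \|_H^2 \leq \| X_s \|_{\ell^2(H)}^2$ together with the hypothesis (\ref{int-cond-2}). Hence each $X^j \bullet M^j \in M_T^2(H)$ is well-defined by the construction of Section~\ref{sec-step-1}.

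Next, for the orthogonality I would invoke Proposition~\ref{prop-cor-integrals}: for $j \neq k$ the defining relation $\langle M^j, M^k \rangle_t = \delta_{jk} \, t$ gives $\langle M^j, M^k \rangle = 0$, and that proposition then yields $\langle X^j \bullet M^j, X^k \bullet M^k \rangle_{M_T^2(H)} = 0$, so the summands are pairwise orthogonal. Finally, using the It\^{o} isometry of Proposition~\ref{prop-Ito-isom-1} and Tonelli's theorem (the integrands being non-negative), I would compute
\[
\sum_{j \in \mathbb{N}} \| X^j \bullet M^j \|_{M_T^2(H)}^2 = \sum_{j \in \mathbb{N}} \mathbb{E} \bigg[ \int_0^T \| X^j_s \|_H^2 \, ds \bigg] = \mathbb{E} \bigg[ \int_0^T \| X_s \|_{\ell^2(H)}^2 \, ds \bigg] < \infty,
\]
where the middle step interchanges the sum with the expectation and time integral, and the final finiteness is exactly (\ref{int-cond-2}). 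With pairwise orthogonality and finite sum of squared norms in hand, Lemma~\ref{lemma-series-Hilbert} delivers unconditional convergence of (\ref{series-int-2}) in $M_T^2(H)$.

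I do not expect a genuine obstacle here; the content is essentially bookkeeping that matches each ingredient to the appropriate earlier result. The only point needing a little care is the interchange of the summation over $j$ with the expectation and time integral, but since all integrands are non-negative this is justified by monotone convergence / Tonelli with no additional hypotheses.
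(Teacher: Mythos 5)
Your proposal is correct and follows essentially the same route as the paper: orthogonality of the summands via Proposition~\ref{prop-cor-integrals}, the norm computation via the It\^{o} isometry (Proposition~\ref{prop-Ito-isom-1}) together with monotone convergence, and the conclusion via Lemma~\ref{lemma-series-Hilbert}. The only difference is that you make explicit the (routine) well-definedness check for each $X^j \bullet M^j$, which the paper leaves implicit.
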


\begin{proof}
For $j,k \in \mathbb{N}$ with $j \neq k$ we have $\langle M^j,M^k \rangle = 0$, and hence, by Proposition~\ref{prop-cor-integrals} we obtain
\begin{align}\label{orthogonal-integrals}
\langle X^j \bullet M^j, X^k \bullet M^k \rangle_{M_T^2(H)} = 0.
\end{align}
Moreover, by the It\^{o} isometry (Proposition~\ref{prop-Ito-isom-1}) and the monotone convergence theorem we have
\begin{equation}\label{calc-Ito-isom-2}
\begin{aligned}
&\sum_{j=1}^{\infty} \| X^j \bullet M^j \|_{M_T^2(H)}^2 = \sum_{j=1}^{\infty} \mathbb{E} \Bigg[ \bigg\| \int_0^T X_s^j dM_s^j \bigg\|_H^2 \Bigg] = \sum_{j=1}^{\infty} \mathbb{E} \bigg[ \int_0^T \| X_s^j \|_H^2 ds \bigg] 
\\ &= \mathbb{E} \bigg[ \int_0^T \sum_{j=1}^{\infty} \| X_s^j \|_H^2 ds \bigg] = \mathbb{E} \bigg[ \int_0^T \| X_s \|_{\ell^2(H)}^2 ds \bigg].
\end{aligned}
\end{equation}
Thus, by (\ref{int-cond-2}) and Lemma~\ref{lemma-series-Hilbert}, the series (\ref{series-int-2}) converges unconditionally in $M_T^2(H)$.
\end{proof}

Therefore, for a $\ell^2(H)$-valued, predictable process $X$ satisfying (\ref{int-cond-2}) we can define the It\^{o} integral as the series (\ref{series-int-2}).

\begin{remark}\label{rem-orth}
As the proof of Proposition~\ref{prop-int-conv-2} shows, the components of the It\^{o} integral $\sum_{j \in \mathbb{N}} X^j \bullet M^j$ are pairwise orthogonal elements of the Hilbert space $M_T^2(H)$.
\end{remark}

\begin{proposition}\label{prop-Ito-isom-step-2}
For each $\ell^2(H)$-valued, predictable process $X$ satisfying (\ref{int-cond-2}) we have the \emph{It\^{o} isometry}
\begin{align*}
\mathbb{E} \Bigg[ \bigg\| \sum_{j \in \mathbb{N}} \int_0^T X_s^j dM_s^j \bigg\|_H^2 \Bigg] = \mathbb{E} \bigg[ \int_0^T \| X_s \|_{\ell^2(H)}^2 ds \bigg].
\end{align*}
\end{proposition}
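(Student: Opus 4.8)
The plan is to mirror the argument used for the It\^{o} isometry in Proposition~\ref{prop-Ito-isom-1}, since all the necessary ingredients have already been assembled in the proof of Proposition~\ref{prop-int-conv-2}. The left-hand side of the asserted isometry is, by the very definition of the norm on $M_T^2(H)$, nothing but the squared norm $\| \sum_{j \in \mathbb{N}} X^j \bullet M^j \|_{M_T^2(H)}^2$ of the It\^{o} integral defined via the series (\ref{series-int-2}). Thus the whole task reduces to evaluating this squared norm.

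First I would recall from (\ref{orthogonal-integrals}) (equivalently, Remark~\ref{rem-orth}) that the summands $X^j \bullet M^j$, $j \in \mathbb{N}$, form a pairwise orthogonal family in the Hilbert space $M_T^2(H)$. Since Proposition~\ref{prop-int-conv-2} already guarantees that the series (\ref{series-int-2}) converges in $M_T^2(H)$, I can invoke the concluding identity of Lemma~\ref{lemma-series-Hilbert} to interchange norm and summation, obtaining
\[
\bigg\| \sum_{j=1}^{\infty} X^j \bullet M^j \bigg\|_{M_T^2(H)}^2 = \sum_{j=1}^{\infty} \| X^j \bullet M^j \|_{M_T^2(H)}^2 .
\]

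Finally, the chain of equalities (\ref{calc-Ito-isom-2}), already carried out in the proof of Proposition~\ref{prop-int-conv-2} via the scalar-martingale It\^{o} isometry (Proposition~\ref{prop-Ito-isom-1}) and the monotone convergence theorem, identifies the right-hand side of the preceding display with $\mathbb{E}[ \int_0^T \| X_s \|_{\ell^2(H)}^2 \, ds ]$, which is exactly the right-hand side of the claimed isometry. Assembling these three steps yields the result.

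I expect essentially no obstacle here: the heavy lifting has already been done in Section~\ref{sec-step-2}, and the only point genuinely requiring care is the exchange of norm and summation, which is not valid for an arbitrary convergent series but is licensed precisely by the pairwise orthogonality of the summands together with Lemma~\ref{lemma-series-Hilbert}. The integrability hypothesis (\ref{int-cond-2}) enters only indirectly, through Proposition~\ref{prop-int-conv-2}, where it secures the convergence of the defining series and thereby legitimizes the manipulations.
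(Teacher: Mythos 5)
Your proposal is correct and follows essentially the same route as the paper's proof: both identify the left-hand side with $\| \sum_{j} X^j \bullet M^j \|_{M_T^2(H)}^2$, use the pairwise orthogonality (\ref{orthogonal-integrals}) together with Lemma~\ref{lemma-series-Hilbert} to exchange norm and summation, and then conclude via the computation (\ref{calc-Ito-isom-2}). No gaps; the paper's proof is exactly this three-step assembly.
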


\begin{proof}
Using (\ref{orthogonal-integrals}), Lemma~\ref{lemma-series-Hilbert} and identity (\ref{calc-Ito-isom-2}) we obtain
\begin{align*}
&\mathbb{E} \Bigg[ \bigg\| \sum_{j=1}^{\infty} \int_0^T X_s^j dM_s^j \bigg\|_H^2 \Bigg] = \bigg\| \sum_{j=1}^{\infty} X^j \bullet M^j \bigg\|_{M_T^2(H)}^2 
\\ &= \sum_{j=1}^{\infty} \| X^j \bullet M^j \|_{M_T^2(H)}^2 = \mathbb{E} \bigg[ \int_0^T \| X_s \|_{\ell^2(H)}^2 ds \bigg],
\end{align*}
completing the proof.
\end{proof}

\begin{proposition}\label{prop-simple-2}
Let $X$ be a $\ell^2(H)$-valued simple process of the form
\begin{align*}
X = X_0 \mathbbm{1}_{\{0\}} + \sum_{i=1}^n X_i \mathbbm{1}_{(t_i,t_{i+1}]}
\end{align*}
with $0 = t_1 < \ldots < t_{n+1} = T$ and $\mathcal{F}_{t_i}$-measurable random variables $X_i : \Omega \rightarrow \ell^2(H)$ for $i = 0,\ldots,n$. Then we have
\begin{align*}
X \bullet M = \sum_{i=1}^n \sum_{j \in \mathbb{N}} X_i^j \big( (M^j)^{t_{i+1}} - (M^j)^{t_i} \big).
\end{align*}
\end{proposition}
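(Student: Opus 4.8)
The plan is to reduce the statement to the component-wise result already established in Proposition~\ref{prop-simple-1}. First I would observe that for each fixed $j \in \mathbb{N}$ the $j$-th component $X^j$ of the $\ell^2(H)$-valued simple process $X$ is itself an $H$-valued simple process, namely
\begin{align*}
X^j = X_0^j \mathbbm{1}_{\{0\}} + \sum_{i=1}^n X_i^j \mathbbm{1}_{(t_i,t_{i+1}]},
\end{align*}
where the random variables $X_i^j : \Omega \rightarrow H$ are $\mathcal{F}_{t_i}$-measurable, being the $j$-th coordinate maps of the $\mathcal{F}_{t_i}$-measurable random variables $X_i : \Omega \rightarrow \ell^2(H)$. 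Since the left-hand side $X \bullet M$ is assumed to be defined, $X$ satisfies the integrability condition (\ref{int-cond-2}), and hence by Proposition~\ref{prop-int-conv-2} the integral is given, by definition, through the series $\sum_{j \in \mathbb{N}} X^j \bullet M^j$, which converges unconditionally in $M_T^2(H)$.

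Next I would apply Proposition~\ref{prop-simple-1} to each summand, using that each $M^j$ is a square-integrable martingale, to obtain
\begin{align*}
X^j \bullet M^j = \sum_{i=1}^n X_i^j \big( (M^j)^{t_{i+1}} - (M^j)^{t_i} \big).
\end{align*}
Substituting this into the defining series yields
\begin{align*}
X \bullet M = \sum_{j \in \mathbb{N}} \sum_{i=1}^n X_i^j \big( (M^j)^{t_{i+1}} - (M^j)^{t_i} \big),
\end{align*}
and it then only remains to interchange the order of summation to arrive at the claimed expression.

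The single point requiring care is this interchange of the inner finite sum over $i$ with the outer unconditionally convergent series over $j$. Since the $i$-sum ranges over the finite index set $\{1,\dots,n\}$, the interchange is harmless: for each fixed $i$ the process $X_i \mathbbm{1}_{(t_i,t_{i+1}]}$ is again a $\ell^2(H)$-valued simple process satisfying (\ref{int-cond-2}), so by Proposition~\ref{prop-int-conv-2} the series $\sum_{j \in \mathbb{N}} X_i^j ( (M^j)^{t_{i+1}} - (M^j)^{t_i} )$ converges in $M_T^2(H)$. Hence all $n$ inner series converge, and a finite sum of convergent series in the Hilbert space $M_T^2(H)$ may be rearranged freely, giving
\begin{align*}
X \bullet M = \sum_{i=1}^n \sum_{j \in \mathbb{N}} X_i^j \big( (M^j)^{t_{i+1}} - (M^j)^{t_i} \big),
\end{align*}
which is the assertion. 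I expect no serious obstacle: the whole argument is a bookkeeping reduction to Propositions~\ref{prop-simple-1} and~\ref{prop-int-conv-2}, with the summation interchange being the only nontrivial, yet entirely elementary, step.
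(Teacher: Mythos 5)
Your proof is correct and follows essentially the same route as the paper's: identify each component $X^j$ as an $H$-valued simple process, apply Proposition~\ref{prop-simple-1} termwise inside the defining series $\sum_{j \in \mathbb{N}} X^j \bullet M^j$, and interchange the finite sum over $i$ with the series over $j$. The only difference is that you justify the interchange explicitly (noting that each inner series $\sum_{j \in \mathbb{N}} X_i^j ((M^j)^{t_{i+1}} - (M^j)^{t_i})$ converges separately, since $X_i \mathbbm{1}_{(t_i,t_{i+1}]}$ is itself an admissible integrand), a step the paper performs without comment.
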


\begin{proof}
For each $j \in \mathbb{N}$ the process $X^j$ is a $H$-valued simple process having the representation
\begin{align*}
X^j = X_0^j \mathbbm{1}_{\{0\}} + \sum_{i=1}^n X_i^j \mathbbm{1}_{(t_i,t_{i+1}]}.
\end{align*}
Hence, by Proposition~\ref{prop-simple-1} we obtain
\begin{align*}
X \bullet M &= \sum_{j \in \mathbb{N}} X^j \bullet M^j = \sum_{j \in \mathbb{N}} \sum_{i=1}^n X_i^j \big( (M^j)^{t_{i+1}} - (M^j)^{t_i} \big)
\\ &= \sum_{i=1}^n \sum_{j \in \mathbb{N}} X_i^j \big( (M^j)^{t_{i+1}} - (M^j)^{t_i} \big),
\end{align*}
which finishes the proof.
\end{proof}

\section{L\'{e}vy processes in Hilbert spaces}\label{sec-Wiener}

In this section, we provide the required results about L\'{e}vy processes in Hilbert spaces.
Let $U$ be a separable Hilbert space. 

\begin{definition}\label{def-Wiener-process}
An $U$-valued c\`{a}dl\`{a}g, adapted process $L$ is called a \emph{L\'{e}vy process} if the following conditions are satisfied:
\begin{enumerate}
\item We have $L_0 = 0$.

\item $L_t - L_s$ is independent of $\mathcal{F}_s$ for all $s \leq t$.

\item We have $L_t - L_s \overset{{\rm d}}{=} L_{t-s}$ for all $s \leq t$.
\end{enumerate}
\end{definition}

\begin{definition}
An $U$-valued L\'{e}vy process $L$ with $\mathbb{E}[\| L_t \|_U^2] < \infty$ and $\mathbb{E}[L_t] = 0$ for all $t \geq 0$ is called a \emph{square-integrable L\'{e}vy martingale}. 
\end{definition}

Note that any square-integrable L\'{e}vy martingale $L$ is indeed a martingale, that is
\begin{align*}
\mathbb{E}[X_t \,|\, \mathcal{F}_s] = X_s \quad \text{for all $s \leq t$,}
\end{align*}
see \cite[Prop.~3.25]{P-Z-book}. According to \cite[Thm.~4.44]{P-Z-book}, for each square-integrable L\'{e}vy martingale $L$ there exists a unique 
self-adjoint, nonnegative definite trace class operator $Q \in L(U)$, called the \emph{covariance operator} of $L$, such that for all $t,s \in \mathbb{R}_+$ and $u_1,u_2 \in U$ we have
\begin{align*}
\mathbb{E}[\langle L_t,u_1 \rangle_U \langle L_s,u_2 \rangle_U] = (t \wedge s) \langle Q u_1,u_2 \rangle_U.
\end{align*}
Moreover, for all $u_1,u_2 \in U$ the angle bracket process is given by
\begin{align}\label{angle-bracket-Levy}
\langle \langle L,u_1 \rangle_U, \langle L,u_2 \rangle_U \rangle_t = t \langle Qu_1,u_2 \rangle_U, \quad t \geq 0,
\end{align}
see \cite[Thm.~4.49]{P-Z-book}.

\begin{lemma}\label{lemma-Wiener-transform}
Let $L$ be an $U$-valued square-integrable L\'{e}vy martingale with covariance operator $Q$, let $V$ be another separable Hilbert space and let $\Phi : U \rightarrow V$ be an isometric isomorphism. Then the process $\Phi(L)$ is a $V$-valued square-integrable L\'{e}vy martingale with covariance operator $Q_{\Phi} := \Phi Q \Phi^{-1}$.
\end{lemma}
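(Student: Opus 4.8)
The plan is to verify the three defining properties of a Lévy process for the transformed process $\Phi(L)$, then to check square-integrability together with the zero-mean (martingale) property, and finally to identify its covariance operator by computing the characterizing bilinear form. Throughout, the two facts I will lean on are that an isometric isomorphism between Hilbert spaces preserves the inner product, $\langle \Phi u_1, \Phi u_2 \rangle_V = \langle u_1, u_2 \rangle_U$, equivalently $\Phi^* = \Phi^{-1}$, and that as a bounded linear operator $\Phi$ commutes with the Bochner expectation.

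First I would establish that $\Phi(L)$ is a $V$-valued square-integrable Lévy martingale. Since $\Phi$ is linear and continuous, $\Phi(L)$ is again c\`adl\`ag and adapted, and $\Phi(L)_0 = \Phi(L_0) = 0$. The increment identity $\Phi(L)_t - \Phi(L)_s = \Phi(L_t - L_s)$, combined with the fact that $\Phi$ is a fixed Borel map, transfers both independence of increments from $\mathcal{F}_s$ and stationarity of increments from $L$ to $\Phi(L)$. Square-integrability and the zero-mean property are immediate from the isometry and linearity, namely $\mathbb{E}[\| \Phi(L)_t \|_V^2] = \mathbb{E}[\| L_t \|_U^2] < \infty$ and $\mathbb{E}[\Phi(L)_t] = \Phi(\mathbb{E}[L_t]) = 0$. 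Hence the theorem quoted before the lemma applies and guarantees that $\Phi(L)$ possesses a covariance operator.

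The main computation is the identification of that operator. For arbitrary $v_1, v_2 \in V$ I set $u_i := \Phi^{-1} v_i$ and use the isometry to rewrite $\langle \Phi(L)_t, v_1 \rangle_V = \langle L_t, u_1 \rangle_U$ and $\langle \Phi(L)_s, v_2 \rangle_V = \langle L_s, u_2 \rangle_U$. The defining identity for $Q$ then gives
\[
\mathbb{E}\big[ \langle \Phi(L)_t, v_1 \rangle_V \langle \Phi(L)_s, v_2 \rangle_V \big] = (t \wedge s) \langle Q u_1, u_2 \rangle_U = (t \wedge s) \langle Q \Phi^{-1} v_1, \Phi^{-1} v_2 \rangle_U,
\]
and applying $\Phi^* = \Phi^{-1}$ once more turns the right-hand inner product into $\langle \Phi Q \Phi^{-1} v_1, v_2 \rangle_V = \langle Q_{\Phi} v_1, v_2 \rangle_V$. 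Since the covariance operator is characterized by this bilinear identity, I conclude that the covariance operator of $\Phi(L)$ equals $Q_{\Phi} = \Phi Q \Phi^{-1}$.

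I do not expect a genuine obstacle; the content is essentially bookkeeping with the isometry relation. The one point that deserves care is the concluding uniqueness step: to invoke the quoted theorem I should record that $Q_{\Phi}$ is itself self-adjoint, nonnegative and trace class (each following from $\Phi$ being unitary, e.g.\ $\mathrm{tr}(Q_{\Phi}) = \mathrm{tr}(Q) < \infty$), so that it is a legitimate candidate, after which equality with the true covariance operator follows either from the uniqueness assertion of the theorem or, more directly, from the elementary fact that a bounded operator is determined by its associated bilinear form.
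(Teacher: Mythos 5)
Your proposal is correct and follows essentially the same route as the paper's proof: verify the L\'{e}vy and martingale properties via the identity $\Phi(L_t)-\Phi(L_s)=\Phi(L_t-L_s)$ and the isometry, then identify the covariance operator by computing $\mathbb{E}[\langle \Phi(L_t),v_1\rangle_V \langle \Phi(L_s),v_2\rangle_V]$ with the substitution $u_i=\Phi^{-1}v_i$. Your extra remark that $Q_{\Phi}$ is itself self-adjoint, nonnegative and trace class is a small but legitimate addition of care that the paper leaves implicit.
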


\begin{proof}
The process $\Phi(L)$ is a $V$-valued c\`{a}dl\`{a}g, adapted process with $\Phi(L_0) = \Phi(0) = 0$. Let $s \leq t$ be arbitrary. Then the random variable $\Phi(L_t) - \Phi(L_s) = \Phi(L_t - L_s)$ is independent of $\mathcal{F}_s$, and we have
\begin{align*}
\Phi(L_t) - \Phi(L_s) = \Phi(L_t - L_s) \overset{{\rm d}}{=} \Phi(L_{t-s}),
\end{align*}
Moreover, for each $t \in \mathbb{R}_+$ we have 
\begin{align*}
\mathbb{E}[ \| \Phi(L_t) \|_V^2] = \mathbb{E}[ \| L_t \|_U^2 ] < \infty \quad \text{and} \quad \mathbb{E}[\Phi(L_t)] = \Phi \mathbb{E}(L_t) = 0, 
\end{align*}
showing that $\Phi(L)$ is a $V$-valued square-integrable L\'{e}vy martingale.

Let $t,s \in \mathbb{R}_+$ and $v_i \in V$, $i=1,2$ be arbitrary, and set $u_i := \Phi^{-1} v_i \in U$, $i=1,2$. Then we have
\begin{align*}
&\mathbb{E}[\langle \Phi(L_t),v_1 \rangle_V \langle \Phi(L_s),v_2 \rangle_V] = \mathbb{E}[\langle \Phi(L_t),\Phi(u_1) \rangle_V \langle \Phi(L_s),\Phi(u_2) \rangle_V]
\\ &= \mathbb{E} [\langle L_t,u_1 \rangle_U \langle L_s,u_2 \rangle_U] = (t \wedge s) \langle Q u_1,u_2 \rangle_U = (t \wedge s) \langle Q \Phi^{-1} v_1,\Phi^{-1} v_2 \rangle_U
\\ &= (t \wedge s) \langle \Phi Q \Phi^{-1} v_1,v_2 \rangle_V = (t \wedge s) \langle Q_{\Phi} v_1,v_2 \rangle_V,
\end{align*}
showing that the L\'{e}vy martingale $\Phi(L)$ has the covariance operator $Q_{\Phi}$.
\end{proof}

Now, let $Q \in L(U)$ be a self-adjoint, positive definite trace class operator. Then there exist a sequence $(\lambda_j)_{j \in \mathbb{N}} \subset (0,\infty)$ with $\sum_{j=1}^{\infty} \lambda_j < \infty$ and an orthonormal basis $(e_j^{(\lambda)})_{j \in \mathbb{N}}$ of $U$ and such that
\begin{align*}
Q e_j^{(\lambda)} = \lambda_j e_j^{(\lambda)} \quad \text{for all $j \in \mathbb{N}$.}
\end{align*}
We define the sequence of pairwise orthogonal vectors $(e_j)_{j \in \mathbb{N}}$ as
\begin{align*}
e_j := \sqrt{\lambda_j} e_j^{(\lambda)}, \quad j \in \mathbb{N}.
\end{align*}

\begin{proposition}\label{prop-W-beta-j}
Let $L$ be an $U$-valued square-integrable L\'{e}vy martingale with covariance operator $Q$. Then the sequence $(M^j)_{j \in \mathbb{N}}$ given by
\begin{align}\label{def-beta-j}
M^j := \frac{1}{\sqrt{\lambda_j}} \langle L,e_j^{(\lambda)} \rangle_U, \quad j \in \mathbb{N}.
\end{align}
is a sequence of standard L\'{e}vy processes.
\end{proposition}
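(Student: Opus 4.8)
The plan is to verify directly the three requirements in the definition of a sequence of standard Lévy processes: that each $M^j$ is a real-valued Lévy process, that each is a square-integrable martingale, and that the cross-variations satisfy $\langle M^j, M^k \rangle_t = \delta_{jk} \cdot t$.

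First I would check that each real-valued process $\langle L, e_j^{(\lambda)} \rangle_U$ is a Lévy process in the sense of Definition~\ref{def-Wiener-process}. Since $\langle \cdot, e_j^{(\lambda)} \rangle_U$ is a bounded linear functional on $U$, this process is càdlàg and adapted, vanishes at $t=0$ because $\langle L_0, e_j^{(\lambda)} \rangle_U = 0$, and inherits independent and stationary increments from $L$ through the identities $\langle L_t, e_j^{(\lambda)} \rangle_U - \langle L_s, e_j^{(\lambda)} \rangle_U = \langle L_t - L_s, e_j^{(\lambda)} \rangle_U$ together with $L_t - L_s \overset{{\rm d}}{=} L_{t-s}$. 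Multiplying by the positive constant $1/\sqrt{\lambda_j}$ preserves all of these properties, so each $M^j$ is a real-valued Lévy process.

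Next I would establish square-integrability and the martingale property. For square-integrability I would apply the covariance formula of $L$ with $u_1 = u_2 = e_j^{(\lambda)}$ and $s = t$, which, using $Q e_j^{(\lambda)} = \lambda_j e_j^{(\lambda)}$ and $\| e_j^{(\lambda)} \|_U = 1$, gives $\mathbb{E}[|M_t^j|^2] = \frac{1}{\lambda_j} \cdot t \langle Q e_j^{(\lambda)}, e_j^{(\lambda)} \rangle_U = t < \infty$. The martingale property then follows by interchanging the conditional expectation with the bounded linear functional $\langle \cdot, e_j^{(\lambda)} \rangle_U$ and invoking that the square-integrable Lévy martingale $L$ is itself a martingale.

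For the final requirement, the angle-bracket formula (\ref{angle-bracket-Levy}) does the essential work. By bilinearity of the angle bracket I would compute
\[
\langle M^j, M^k \rangle_t = \frac{1}{\sqrt{\lambda_j \lambda_k}} \big\langle \langle L, e_j^{(\lambda)} \rangle_U, \langle L, e_k^{(\lambda)} \rangle_U \big\rangle_t = \frac{t}{\sqrt{\lambda_j \lambda_k}} \langle Q e_j^{(\lambda)}, e_k^{(\lambda)} \rangle_U = \frac{t \lambda_j}{\sqrt{\lambda_j \lambda_k}} \delta_{jk},
\]
and the last expression equals $\delta_{jk} \cdot t$, as required. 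I expect the only genuine subtlety --- though still routine --- to be the justification that passing the bounded linear functional $\langle \cdot, e_j^{(\lambda)} \rangle_U$ through the $U$-valued martingale $L$ produces a real-valued martingale, i.e. the interchange of conditional expectation and inner product; every other step is an immediate consequence of the covariance and angle-bracket identities recorded before the statement.
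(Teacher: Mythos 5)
Your proposal is correct and takes essentially the same route as the paper: the decisive step in both is the angle-bracket computation via (\ref{angle-bracket-Levy}) together with $Q e_j^{(\lambda)} = \lambda_j e_j^{(\lambda)}$, yielding $\langle M^j,M^k \rangle_t = \delta_{jk} \cdot t$. The only difference is that the paper asserts in a single sentence that each $M^j$ is a real-valued square-integrable L\'{e}vy martingale, while you spell out those routine verifications (functionals of L\'{e}vy processes, the covariance formula, and the interchange of conditional expectation with the inner product), all of which are sound.
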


\begin{proof}
For each $j \in \mathbb{N}$ the process $M^j$ is a real-valued square-integrable L\'{e}vy martingale. By (\ref{angle-bracket-Levy}), for all $j,k \in \mathbb{N}$ we obtain
\begin{align*}
\langle M^j,M^k \rangle_t &= \frac{1}{\sqrt{\lambda_j \lambda_k}} \langle \langle L,e_j^{(\lambda)} \rangle_U, \langle L,e_k^{(\lambda)} \rangle_U \rangle_t = \frac{t \langle Q e_j^{(\lambda)}, e_k^{(\lambda)} \rangle_U}{\sqrt{\lambda_j \lambda_k}}
\\ &= \frac{t \lambda_j \langle e_j^{(\lambda)}, e_k^{(\lambda)} \rangle_U}{\sqrt{\lambda_j \lambda_k}} = \delta_{jk} \cdot t,
\end{align*}
showing that $(M^j)_{j \in \mathbb{N}}$ is a sequence of standard L\'{e}vy processes.
\end{proof}

\section{The It\^{o} integral with respect to a $\ell_{\lambda}^2$-valued L\'{e}vy process}\label{sec-step-3}

In this section, we introduce the It\^{o} integral for $\ell^2(H)$-valued processes with respect to a $\ell_{\lambda}^2$-valued L\'{e}vy process, which is based on the It\^{o} integral (\ref{series-int-2}) from Section~\ref{sec-step-2}.

Let $(\lambda_j)_{j \in \mathbb{N}} \subset (0,\infty)$ be a sequence with $\sum_{j = 1}^{\infty} \lambda_j < \infty$ and denote by $\ell_{\lambda}^2$ the weighted space of sequences
\begin{align*}
\ell_{\lambda}^2 := \bigg\{ (v^j)_{j \in \mathbb{N}} \subset \mathbb{R} : \sum_{j=1}^{\infty} \lambda_j|v^j|^2 < \infty \bigg\},
\end{align*}
which, equipped with the inner product
\begin{align*}
\langle v,w \rangle_{\ell_{\lambda}^2} = \sum_{j=1}^{\infty} \lambda_j v^j w^j
\end{align*}
is a separable Hilbert space. Note that we have the strict inclusion $\ell^2 \subsetneqq \ell_{\lambda}^2$, where $\ell^2$ denotes the space of sequences
\begin{align*}
\ell^2 = \bigg\{ (v^j)_{j \in \mathbb{N}} \subset \mathbb{R} : \sum_{j=1}^{\infty} |v^j|^2 < \infty \bigg\}.
\end{align*}
We denote by $(g_j)_{j \in \mathbb{N}}$ the standard orthonormal basis of $\ell^2$, which is given by
\begin{align*}
g_1 = (1,0,\ldots), \quad g_2 = (0,1,0,\ldots), \quad \ldots
\end{align*}
Then the system $(g_j^{(\lambda)})_{j \in \mathbb{N}}$ defined as
\begin{align}\label{ONB-g}
g_j^{(\lambda)} := \frac{g_j}{\sqrt{\lambda_j}}, \quad j \in \mathbb{N}
\end{align}
is an orthonormal basis of $\ell_{\lambda}^2$. Let $Q \in L(\ell_{\lambda}^2)$ be a linear operator such that
\begin{align}\label{eigenvalues-l2-lambda}
Q g_j^{(\lambda)} = \lambda_j g_j^{(\lambda)} \quad \text{for all $j \in \mathbb{N}$.}
\end{align}
Then $Q$ is a nuclear, self-adjoint, positive definite operator. Let $L$ be a $\ell_{\lambda}^2$-valued, square-integrable L\'{e}vy martingale with covariance operator $Q$. According to Proposition~\ref{prop-W-beta-j}, the sequence $(M^j)_{j \in \mathbb{N}}$ given by
\begin{align*}
M^j := \frac{1}{\sqrt{\lambda_j}} \langle L,g_j^{(\lambda)} \rangle_{\ell_{\lambda}^2}, \quad j \in \mathbb{N}
\end{align*}
is a sequence of standard L\'{e}vy processes.

\begin{definition}\label{def-Ito-step-2}
For every $\ell^2(H)$-valued, predictable process $X$ satisfying (\ref{int-cond-2}) we define the \emph{It\^{o} integral} $X \bullet L := (\int_0^t X_s dL_s)_{t \in [0,T]}$ as
\begin{align}\label{def-as-series}
X \bullet L := \sum_{j \in \mathbb{N}} X^j \bullet M^j.
\end{align}
\end{definition}

\begin{remark}
Note that $L_2^0(H) \cong \ell^2(H)$, where $L_2^0(H)$ denotes the space of Hilbert-Schmidt operators from $\ell^2$ to $H$. In \cite{fillnm}, the It\^{o} integral for $L_2^0(H)$-valued processes with respect to a $\ell_{\lambda}^2$-valued Wiener process has been constructed in the usual fashion (first for elementary and afterwards for general processes), and then the series representation (\ref{def-as-series}) has been proven, see \cite[Prop.~2.2.1]{fillnm}.
\end{remark}

Now, let $(\mu_k)_{k \in \mathbb{N}}$ be another sequence with $\sum_{k=1}^{\infty} \mu_k < \infty$, and let $\Phi : \ell_{\lambda}^2 \rightarrow \ell_{\mu}^2$ be an isometric isomorphism such that
\begin{align}\label{eigenvalues-l2-mu}
Q_{\Phi} g_k^{(\mu)} = \mu_k g_k^{(\mu)} \quad \text{for all $k \in \mathbb{N}$.}
\end{align}
By Lemma~\ref{lemma-Wiener-transform}, the process $\Phi(L)$ is a $\ell_{\mu}^2$-valued, square integrable L\'{e}vy martingale with covariance operator $Q_{\Phi}$, and by Proposition~\ref{prop-W-beta-j}, the sequence $(N^k)_{k \in \mathbb{N}}$ given by
\begin{align*}
N^k := \frac{1}{\sqrt{\mu_k}} \langle \Phi(L),g_k^{(\mu)} \rangle_{\ell_{\mu}^2}, \quad k \in \mathbb{N}
\end{align*}
is a sequence of standard L\'{e}vy processes.

\begin{theorem}\label{thm-angle}
Let $\Psi \in L(\ell^2(H))$ be an isometric isomorphism such that
\begin{align}\label{rel-Phi-Psi}
\langle h,\Psi(w) \rangle_H = \Phi( \langle h,w \rangle_H) \quad \text{for all $h \in H$ and $w \in \ell^2(H)$.}
\end{align}
Then for every $\ell^2(H)$-valued, predictable process $X$ satisfying (\ref{int-cond-2}) we have
\begin{align}\label{Psi-l2-integrable}
\mathbb{E} \bigg[ \int_0^T \| \Psi(X_s) \|_{\ell^2(H)}^2 ds \bigg] < \infty
\end{align}
and the identity
\begin{align}\label{identity-geom-angle}
X \bullet L = \Psi(X) \bullet \Phi(L).
\end{align}
\end{theorem}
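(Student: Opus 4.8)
The plan is to prove the two claims separately: the integrability~(\ref{Psi-l2-integrable}) is immediate, while the integral identity~(\ref{identity-geom-angle}) I would reduce to a family of scalar identities and settle with the covariation formula of Theorem~\ref{thm-cov}.

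For~(\ref{Psi-l2-integrable}): since $\Psi$ is a (continuous) isometric isomorphism, $\Psi(X)$ is again an $\ell^2(H)$-valued predictable process and $\| \Psi(X_s) \|_{\ell^2(H)} = \| X_s \|_{\ell^2(H)}$ pointwise, so~(\ref{Psi-l2-integrable}) follows from~(\ref{int-cond-2}). By Lemma~\ref{lemma-Wiener-transform} and Proposition~\ref{prop-W-beta-j}, the process $\Phi(L)$ is a $\ell_{\mu}^2$-valued square-integrable L\'{e}vy martingale and $(N^k)_{k \in \mathbb{N}}$ is a sequence of standard L\'{e}vy processes, so the right-hand side $\Psi(X) \bullet \Phi(L) = \sum_{k \in \mathbb{N}} \Psi(X)^k \bullet N^k$ is well defined by Definition~\ref{def-Ito-step-2}.

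To prove~(\ref{identity-geom-angle}) I would first test against vectors $h \in H$. Fixing $h$ and setting $a := \langle h,X \rangle_H$ (an $\ell^2$-valued, hence $\ell_{\lambda}^2$-valued, predictable process with components $a^j = \langle h,X^j \rangle_H$), I apply the continuous functional $\langle h,\cdot \rangle_H \colon M_T^2(H) \to M_T^2(\mathbb{R})$ to both convergent series and use the scalar reduction $\langle h, Y \bullet K \rangle_H = \langle h,Y \rangle_H \bullet K$ already contained in the proof of Proposition~\ref{prop-int-well-def}, to obtain in $M_T^2(\mathbb{R})$ the representations $\langle h, X \bullet L \rangle_H = \sum_j a^j \bullet M^j$ and $\langle h, \Psi(X) \bullet \Phi(L) \rangle_H = \sum_k b^k \bullet N^k$, where $b^k := \langle h,\Psi(X)^k \rangle_H$ are, by~(\ref{rel-Phi-Psi}), precisely the components of $\Phi(a)$. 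By separability of $H$ it then suffices, exactly as at the end of the proof of Proposition~\ref{prop-int-well-def}, to show that these two real-valued martingales coincide for every $h$.

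The core is the angle-bracket computation. Writing $c_{jk} := \langle g_j^{(\lambda)}, \Phi^{-1} g_k^{(\mu)} \rangle_{\ell_{\lambda}^2}$, the matrix $(c_{jk})$ representing the unitary $\Phi^{-1}$ is orthogonal. The decisive point, supplied by~(\ref{eigenvalues-l2-mu}): since $Q_{\Phi} = \Phi Q \Phi^{-1}$, the relation $Q_{\Phi} g_k^{(\mu)} = \mu_k g_k^{(\mu)}$ means that $\Phi^{-1} g_k^{(\mu)}$ is a $Q$-eigenvector for the eigenvalue $\mu_k$, so comparison with $Q g_j^{(\lambda)} = \lambda_j g_j^{(\lambda)}$ forces $c_{jk}(\lambda_j - \mu_k) = 0$, hence $c_{jk} = 0$ unless $\lambda_j = \mu_k$. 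Using $\langle \Phi(L), g_k^{(\mu)} \rangle_{\ell_{\mu}^2} = \langle L, \Phi^{-1} g_k^{(\mu)} \rangle_{\ell_{\lambda}^2}$ together with~(\ref{angle-bracket-Levy}) I get $\langle M^j, N^k \rangle_t = t\, c_{jk}$, and the eigenvalue matching makes all weight factors $\sqrt{\lambda_j}/\sqrt{\mu_k}$ cancel, yielding $b^k = \sum_j c_{jk} a^j$ and $\| \Phi(a_s) \|_{\ell^2} = \| a_s \|_{\ell^2}$. Then, with $P := \sum_j a^j \bullet M^j$ and $R := \sum_k b^k \bullet N^k$, the orthogonality of the integrals (Proposition~\ref{prop-cor-integrals}) and the It\^{o} isometry give $\mathbb{E}[P_T^2] = \mathbb{E}[R_T^2] = \mathbb{E} \int_0^T \| a_s \|_{\ell^2}^2\, ds$, while the real-valued covariation formula gives $\mathbb{E}[P_T R_T] = \sum_{j,k} c_{jk}\, \mathbb{E} \int_0^T a_s^j b_s^k\, ds = \mathbb{E} \int_0^T \sum_k b_s^k \big( \sum_j c_{jk} a_s^j \big)\, ds = \mathbb{E} \int_0^T \| a_s \|_{\ell^2}^2\, ds$; hence $\mathbb{E}[(P_T - R_T)^2] = 0$, which is the desired identity. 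I expect the main obstacle to be the rigour of these double-index manipulations, in particular the interchange of $\sum_{j,k}$ with $\mathbb{E} \int_0^T$ in the cross term. I would handle this by grouping indices into the eigenvalue blocks $J_l := \{ j : \lambda_j = \nu_l \}$ and $K_l := \{ k : \mu_k = \nu_l \}$ over the distinct eigenvalues $\nu_l$ of $Q$; each block is finite (nonzero eigenvalues of a trace-class operator have finite multiplicity) and $(c_{jk})$ restricts there to a finite orthogonal matrix, so Fubini is trivial within a block, and the remaining sum over $l$ of nonnegative terms converges by monotone convergence.
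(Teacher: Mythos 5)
Your proof is correct, and while it shares the paper's outer frame, its core argument is genuinely different. Like the paper, you dispose of (\ref{Psi-l2-integrable}) by the isometry of $\Psi$, reduce (\ref{identity-geom-angle}) via $Y \mapsto \langle h, Y\rangle_H$ and separability of $H$ to a scalar identity in $M_T^2(\mathbb{R})$, and rely on the decisive structural fact that $\langle g_j^{(\lambda)}, \Phi^{-1} g_k^{(\mu)} \rangle_{\ell_{\lambda}^2} = 0$ unless $\lambda_j = \mu_k$ (the paper's (\ref{orth-eigenvectors})), together with the finite multiplicity of each eigenvalue of the trace class operator $Q$. Where you diverge is in how the scalar identity $\sum_j a^j \bullet M^j = \sum_k b^k \bullet N^k$ is established: the paper proves it by a direct rearrangement of the double series of stochastic integrals (expanding $\langle L, g_j^{(\lambda)}\rangle_{\ell_{\lambda}^2}$ in the basis $(g_k^{(\mu)})$, interchanging the $j$- and $k$-sums using (\ref{orth-eigenvectors}) and the finiteness of eigenvalue matchings, and letting the Fourier coefficients collapse), whereas you avoid manipulating series of stochastic integrals altogether and instead run a second-moment argument in the Hilbert space $M_T^2(\mathbb{R})$: using the covariation formula of Theorem~\ref{thm-cov}, Proposition~\ref{prop-cor-integrals}, the bracket computation $\langle M^j, N^k\rangle_t = t\,c_{jk}$ from (\ref{angle-bracket-Levy}), and the block-orthogonality of $(c_{jk})$, you show $\mathbb{E}[P_T^2] = \mathbb{E}[R_T^2] = \mathbb{E}[P_T R_T]$, hence $\mathbb{E}[(P_T - R_T)^2] = 0$. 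What each approach buys: the paper's route is more constructive (it exhibits the transformation of one series into the other term by term) but its interchange of infinite sums inside stochastic integrals is the delicate step; your route trades that for moment computations in which every interchange involves either a finite eigenvalue block or a series of nonnegative terms handled by monotone convergence, which is arguably easier to make airtight, at the price of invoking the covariation machinery of Section~\ref{sec-step-1} (which the paper has available but does not use in this proof) and of the extra bookkeeping needed to see that $(c_{jk})$ restricted to a block $J_l \times K_l$ is a square orthogonal matrix -- which holds because $(g_j^{(\lambda)})_{j \in J_l}$ and $(\Phi^{-1} g_k^{(\mu)})_{k \in K_l}$ are both orthonormal bases of the same finite-dimensional eigenspace of $Q$. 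Your identification of the double-index interchange as the main obstacle, and its resolution by eigenvalue blocks, is exactly right and closes the argument.
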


\begin{proof}
Since $\Psi$ is an isometry, by (\ref{int-cond-2}) we have
\begin{align*}
\mathbb{E} \bigg[ \int_0^T \| \Psi(X_s) \|_{\ell^2(H)}^2 ds \bigg] = \mathbb{E} \bigg[ \int_0^T \| X_s \|_{\ell^2(H)}^2 ds \bigg] < \infty,
\end{align*}
showing (\ref{Psi-l2-integrable}). Moreover, by (\ref{eigenvalues-l2-mu}) we have
\begin{align*}
\Phi Q \Phi^{-1} g_k^{(\mu)} = Q_{\Phi} g_k^{(\mu)} = \mu_k g_k^{(\mu)} \quad \text{for all $k \in \mathbb{N}$,}
\end{align*}
and hence, we get
\begin{align}\label{eigenvalues-l2-mu-x}
Q (\Phi^{-1} g_k^{(\mu)}) = \mu_k (\Phi^{-1} g_k^{(\mu)}) \quad \text{for all $k \in \mathbb{N}$.}
\end{align}
By (\ref{eigenvalues-l2-lambda}) and (\ref{eigenvalues-l2-mu-x}), the vectors $(g_j^{(\lambda)})_{j \in \mathbb{N}}$ and $(\Phi^{-1} g_k^{(\mu)})_{k \in \mathbb{N}}$ are eigenvectors of $Q$ with corresponding eigenvalues $(\lambda_j)_{j \in \mathbb{N}}$ and $(\mu_k)_{k \in \mathbb{N}}$. Therefore, and since $\Phi$ is an isometry, for $j,k \in \mathbb{N}$ with $\lambda_j \neq \mu_k$ we obtain
\begin{align}\label{orth-eigenvectors}
\langle \Phi g_j^{(\lambda)}, g_k^{(\mu)} \rangle_{\ell_{\mu}^2} = \langle g_j^{(\lambda)},\Phi^{-1} g_k^{(\mu)} \rangle_{\ell_{\lambda}^2} = 0.
\end{align}
Let $h \in H$ be arbitrary. Then we have
\begin{align*}
&\langle h,X \bullet L \rangle_H = \Big\langle h, \sum_{j=1}^{\infty} X^j \bullet M^j \Big\rangle_H = \sum_{j=1}^{\infty} \langle h,X^j \bullet M^j \rangle_H = \sum_{j=1}^{\infty} \langle h,X^j \rangle_H \bullet M^j
\\ &= \sum_{j=1}^{\infty} \frac{1}{\sqrt{\lambda_j}} \langle \langle h,X \rangle_H, g_j^{(\lambda)} \rangle_{\ell_{\lambda}^2} \bullet \frac{1}{\sqrt{\lambda_j}} \langle L,g_j^{(\lambda)} \rangle_{\ell_{\lambda}^2}
\\ &= \sum_{j=1}^{\infty} \frac{1}{\sqrt{\lambda_j}} \langle \Phi(\langle h,X \rangle_H), \Phi g_j^{(\lambda)} \rangle_{\ell_{\mu}^2} \bullet \frac{1}{\sqrt{\lambda_j}} \langle \Phi(L),\Phi g_j^{(\lambda)} \rangle_{\ell_{\mu}^2}
\\ &= \sum_{j=1}^{\infty} \frac{1}{\sqrt{\lambda_j}} \langle \Phi(\langle h,X \rangle_H), \Phi g_j^{(\lambda)} \rangle_{\ell_{\mu}^2} \bullet \frac{1}{\sqrt{\lambda_j}} \bigg( \sum_{k=1}^{\infty} \langle \Phi(L), g_k^{(\mu)} \rangle_{\ell_{\mu}^2} \langle g_k^{(\mu)},\Phi g_j^{(\lambda)} \rangle_{\ell_{\mu}^2} \bigg).
\end{align*}
Since $(\lambda_j)_{j \in \mathbb{N}}$ and $(\mu_k)_{k \in \mathbb{N}}$ are eigenvalues of $Q$, for each $j \in \mathbb{N}$ there are only finitely many $k \in \mathbb{N}$ such that $\lambda_j = \mu_k$. Therefore, by (\ref{orth-eigenvectors}), and since
$(\Phi(g_j^{(\lambda)}))_{j \in \mathbb{N}}$ is an orthonormal basis of $\ell_{\mu}^2$, we obtain
\begin{align*}
&\langle h,X \bullet L \rangle_H 
\\ &= \sum_{k=1}^{\infty} \bigg( \sum_{j=1}^{\infty} \frac{1}{\lambda_j} \langle \Phi(\langle h,X \rangle_H), \Phi g_j^{(\lambda)} \rangle_{\ell_{\mu}^2} \langle \Phi g_j^{(\lambda)},g_k^{(\mu)} \rangle_{\ell_{\mu}^2} \bigg) \bullet \langle \Phi(L), g_k^{(\mu)} \rangle_{\ell_{\mu}^2}
\\ &= \sum_{k=1}^{\infty} \frac{1}{\mu_k} \bigg( \sum_{j=1}^{\infty} \langle \Phi(\langle h,X \rangle_H), \Phi g_j^{(\lambda)} \rangle_{\ell_{\mu}^2} \langle \Phi g_j^{(\lambda)},g_k^{(\mu)} \rangle_{\ell_{\mu}^2} \bigg) \bullet \langle \Phi(L), g_k^{(\mu)} \rangle_{\ell_{\mu}^2}
\\ &= \sum_{k=1}^{\infty} \frac{1}{\sqrt{\mu_k}} \langle \Phi(\langle h,X \rangle_H),g_k^{(\mu)} \rangle_{\ell_{\mu}^2} \bullet \frac{1}{\sqrt{\mu_k}} \langle \Phi(L), g_k^{(\mu)} \rangle_{\ell_{\mu}^2}
= \sum_{k=1}^{\infty} \Phi(\langle h,X \rangle_H)^k \bullet N^k.
\end{align*}
Thus, taking into account (\ref{rel-Phi-Psi}) gives us
\begin{align*}
\langle h,X \bullet L \rangle_H &= \sum_{k=1}^{\infty} \langle h,\Psi(X)^k \rangle_H \bullet N^k = \sum_{k=1}^{\infty} \langle h, \Psi(X)^k \bullet N^k \rangle_H
\\ &= \Big\langle h, \sum_{k=1}^{\infty} \Psi(X)^k \bullet N^k \Big\rangle_H = \langle h, \Psi(X) \bullet \Phi(L) \rangle_H.
\end{align*}
Since $h \in H$ was arbitrary, using the separability of $H$ as in the proof of Proposition~\ref{prop-int-well-def},
we arrive at (\ref{identity-geom-angle}).
\end{proof}

\begin{remark}
From a geometric point of view, Theorem~\ref{thm-angle} says that the ``angle'' measured by the It\^{o} integral is preserved under isometries.
\end{remark}

\section{The It\^{o} integral with respect to a general L\'{e}vy process}\label{sec-step-4}

In this section, we define the It\^{o} integral with respect to a general L\'{e}vy process, which is based on the It\^{o} integral (\ref{def-as-series}) from the previous section.

Let $U$ be a separable Hilbert space and let $Q \in L(U)$ be a nuclear, self-adjoint, positive definite linear operator. Then there exist a sequence $(\lambda_j)_{j \in \mathbb{N}} \subset (0,\infty)$ with $\sum_{j=1}^{\infty} \lambda_j < \infty$ and an orthonormal basis $(e_j^{(\lambda)})_{j \in \mathbb{N}}$ of $U$ and such that
\begin{align}\label{Q-e-diagonal}
Q e_j^{(\lambda)} = \lambda_j e_j^{(\lambda)} \quad \text{for all $j \in \mathbb{N}$,}
\end{align}
namely, the $\lambda_j$ are the eigenvalues of $Q$, and each $e_j^{(\lambda)}$
is an eigenvector corresponding to $\lambda_j$. The space $U_0 := Q^{1/2}(U)$, equipped with the inner product
\begin{align*}
\langle u,v \rangle_{U_0} := \langle Q^{-1/2} u, Q^{-1/2} v \rangle_{U}, 
\end{align*}
is another separable Hilbert space
and the sequence $(e_j)_{j \in \mathbb{N}}$ given by
\begin{align*}
e_j = \sqrt{\lambda_j} e_j^{(\lambda)}, \quad j \in \mathbb{N}
\end{align*}
is an orthonormal basis of $U_0$. We denote by $L_2^0(H) := L_2(U_0,H)$ the space of Hilbert-Schmidt
operators from $U_0$ into $H$, which, endowed with the
Hilbert-Schmidt norm
\begin{align*}
\| S \|_{L_2^0(H)} := \bigg( \sum_{j=1}^{\infty} \| S e_j \|_H^2 \bigg)^{1/2},
\quad S \in L_2^0(H)
\end{align*}
itself is a separable Hilbert space. We define the isometric isomorphisms 
\begin{align}\label{Phi-lambda}
&\Phi_{\lambda} : U \rightarrow \ell_{\lambda}^2, \quad \Phi_{\lambda} e_j^{(\lambda)} := g_j^{(\lambda)} \text{ for $j \in \mathbb{N}$,}
\\ \label{Psi-lambda} &\Psi_{\lambda} : L_2^0(H) \rightarrow \ell^2(H), \quad \Psi_{\lambda}(S) := \big( S e_j \big)_{j \in \mathbb{N}} \text{ for $S \in L_2^0(H)$.} 
\end{align}
Recall that $(g_j^{(\lambda)})_{j \in \mathbb{N}}$ denotes the orthonormal basis of $\ell_{\lambda}^2$, which we have defined in (\ref{ONB-g}). Let $L$ be an $U$-valued square-integrable L\'{e}vy martingale with covariance operator $Q$.

\begin{lemma}\label{lemma-transform-l2}
The following statements are true:
\begin{enumerate}
\item The process $\Phi_{\lambda}(L)$ is a $\ell_{\lambda}^2$-valued square-integrable L\'{e}vy martingale with covariance operator $Q_{\Phi_{\lambda}}$.

\item We have
\begin{align}\label{Q-trans-diag}
Q_{\Phi_{\lambda}} g_j^{(\lambda)} = \lambda_j g_j^{(\lambda)} \quad \text{for all $j \in \mathbb{N}$.}
\end{align}
\end{enumerate}
\end{lemma}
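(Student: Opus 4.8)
The plan is to dispatch both statements essentially by invoking results already established, since this lemma is a direct specialization of the general transformation machinery from Section~\ref{sec-Wiener}.

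For the first statement, I would simply apply Lemma~\ref{lemma-Wiener-transform} with the target space $V := \ell_{\lambda}^2$ and the isometric isomorphism $\Phi := \Phi_{\lambda}$. Since $L$ is by hypothesis a $U$-valued square-integrable L\'{e}vy martingale with covariance operator $Q$, and $\Phi_{\lambda} : U \rightarrow \ell_{\lambda}^2$ is an isometric isomorphism by (\ref{Phi-lambda}), Lemma~\ref{lemma-Wiener-transform} immediately yields that $\Phi_{\lambda}(L)$ is a $\ell_{\lambda}^2$-valued square-integrable L\'{e}vy martingale with covariance operator $Q_{\Phi_{\lambda}} = \Phi_{\lambda} Q \Phi_{\lambda}^{-1}$. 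Nothing further is needed here.

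For the second statement, I would carry out a short direct computation using the explicit form $Q_{\Phi_{\lambda}} = \Phi_{\lambda} Q \Phi_{\lambda}^{-1}$ together with the defining relation $\Phi_{\lambda} e_j^{(\lambda)} = g_j^{(\lambda)}$ from (\ref{Phi-lambda}), which gives $\Phi_{\lambda}^{-1} g_j^{(\lambda)} = e_j^{(\lambda)}$. Chaining the three facts,
\begin{align*}
Q_{\Phi_{\lambda}} g_j^{(\lambda)} = \Phi_{\lambda} Q \Phi_{\lambda}^{-1} g_j^{(\lambda)} = \Phi_{\lambda} Q e_j^{(\lambda)} = \Phi_{\lambda} (\lambda_j e_j^{(\lambda)}) = \lambda_j \Phi_{\lambda} e_j^{(\lambda)} = \lambda_j g_j^{(\lambda)},
\end{align*}
where the third equality uses the eigenvalue relation (\ref{Q-e-diagonal}) and the fourth uses linearity of $\Phi_{\lambda}$. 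This establishes (\ref{Q-trans-diag}).

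There is no genuine obstacle in this lemma; it is a routine verification whose only content is correctly identifying the ingredients. The one point worth stating carefully is that $\Phi_{\lambda}$ maps the eigenbasis $(e_j^{(\lambda)})_{j \in \mathbb{N}}$ of $Q$ onto the orthonormal basis $(g_j^{(\lambda)})_{j \in \mathbb{N}}$ of $\ell_{\lambda}^2$ and therefore conjugates $Q$ into an operator that is diagonal in exactly the same way; all the eigenvalues $\lambda_j$ are preserved. This diagonal structure of $Q_{\Phi_{\lambda}}$ is precisely what is required to apply the constructions of Section~\ref{sec-step-3} to the transformed process $\Phi_{\lambda}(L)$.
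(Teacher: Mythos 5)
Your proposal is correct and follows essentially the same route as the paper: part (1) by direct application of Lemma~\ref{lemma-Wiener-transform} with $\Phi = \Phi_{\lambda}$, and part (2) by the identical chain of equalities $Q_{\Phi_{\lambda}} g_j^{(\lambda)} = \Phi_{\lambda} Q \Phi_{\lambda}^{-1} g_j^{(\lambda)} = \Phi_{\lambda} Q e_j^{(\lambda)} = \lambda_j g_j^{(\lambda)}$ using (\ref{Phi-lambda}) and (\ref{Q-e-diagonal}). There is nothing to add.
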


\begin{proof}
By Lemma~\ref{lemma-Wiener-transform}, the process $\Phi_{\lambda}(L)$ is a $\ell_{\lambda}^2$-valued square-integrable L\'{e}vy martingale with covariance operator $Q_{\Phi_{\lambda}}$. Furthermore, by (\ref{Phi-lambda}) and (\ref{Q-e-diagonal}), for all $j \in \mathbb{N}$ we obtain
\begin{align*}
Q_{\Phi_{\lambda}} g_j^{(\lambda)} = \Phi_{\lambda} Q \Phi_{\lambda}^{-1} g_j^{(\lambda)} = \Phi_{\lambda} Q e_j^{(\lambda)} = \Phi_{\lambda}(\lambda_j e_j^{(\lambda)}) = \lambda_j \Phi_{\lambda} e_j^{(\lambda)} = \lambda_j g_j^{(\lambda)},
\end{align*}
showing (\ref{Q-trans-diag}).
\end{proof}

Now, our idea is to the define the It\^{o} integral for a $L_2^0(H)$-valued, predictable process $X$ with
\begin{align}\label{int-cond-3}
\mathbb{E} \bigg[ \int_0^T \| X_s \|_{L_2^0(H)}^2 ds \bigg] < \infty
\end{align}
by setting
\begin{align}\label{def-integral-general}
X \bullet L := \Psi_{\lambda}(X) \bullet \Phi_{\lambda}(L),
\end{align}
where the right-hand side of (\ref{def-integral-general}) denotes the It\^{o} integral (\ref{def-as-series}) from Definition~\ref{def-Ito-step-2}.
One might suspect that this definition depends on the choice of the eigenvalues $(\lambda_j)_{j \in \mathbb{N}}$ and eigenvectors $(e_j^{(\lambda)})_{j \in \mathbb{N}}$. In order to prove that this is not the case,
let $(\mu_k)_{k \in \mathbb{N}} \subset (0,\infty)$ be another sequence with $\sum_{k=1}^{\infty} \mu_k < \infty$ and let $(f_k^{(\mu)})_{k \in \mathbb{N}}$ be another orthonormal basis of $U$ such that
\begin{align}\label{Q-f-diagonal}
Q f_k^{(\mu)} = \mu_k f_k^{(\mu)} \quad \text{for all $k \in \mathbb{N}$.}
\end{align}
Then the sequence $(f_k)_{k \in \mathbb{N}}$ given by
\begin{align*}
f_k = \sqrt{\mu_k} f_k^{(\mu)}, \quad k \in \mathbb{N}
\end{align*}
is an orthonormal basis of $U_0$. Analogous to (\ref{Phi-lambda}) and (\ref{Psi-lambda}), we define the isometric isomorphisms 
\begin{align*}
&\Phi_{\mu} : U \rightarrow \ell_{\mu}^2, \quad \Phi_{\mu} f_k^{(\mu)} := g_k^{(\mu)} \text{ for $k \in \mathbb{N}$,}
\\ &\Psi_{\mu} : L_2^0(H) \rightarrow \ell^2(H), \quad \Psi_{\mu}(S) := \big( S f_k \big)_{k \in \mathbb{N}} \text{ for $S \in L_2^0(H)$.} 
\end{align*}
Furthermore, we define the isometric isomorphisms
\begin{align*}
\Phi := \Phi_{\mu} \circ \Phi_{\lambda}^{-1} : \ell_{\lambda}^2 \rightarrow \ell_{\mu}^2 \quad \text{and} \quad \Psi := \Psi_{\mu} \circ \Psi_{\lambda}^{-1} : \ell^2(H) \rightarrow \ell^2(H).
\end{align*}
The following diagram illustrates the situation:
\begin{align*}
\begin{xy}
  \xymatrix{
      (\ell_{\lambda}^2,\ell^2(H)) \ar@<2pt>[rr]^{(\Phi,\Psi)}  &     &  (\ell_{\mu}^2,\ell^2(H))\\
                             & \ar[ul]_{(\Phi_{\lambda},\Psi_{\lambda})} (U, L_2^0(H)) \ar[ru]^{(\Phi_{\mu},\Psi_{\mu})} &
  }
\end{xy}
\end{align*}
In order to show that the It\^{o} integral (\ref{def-integral-general}) is well-defined, we have to show that
\begin{align}\label{indentity-angle}
\Psi_{\lambda}(X) \bullet \Phi_{\lambda}(L) = \Psi_{\mu}(X) \bullet \Phi_{\mu}(L).
\end{align}
For this, we prepare the following auxiliary result:

\begin{lemma}\label{lemma-Phi-Psi}
For all $h \in H$ and $w \in \ell^2(H)$ we have
\begin{align*}
\langle h,\Psi(w) \rangle_H = \Phi(\langle h,w \rangle_H).
\end{align*}
\end{lemma}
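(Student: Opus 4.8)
The plan is to factor both sides of the claimed identity through the single intrinsic object $S^{*}h$, where $S := \Psi_{\lambda}^{-1}(w) \in L_2^0(H)$ and $S^{*} : H \to U_0$ is the Hilbert space adjoint of $S$ (with respect to the $U_0$-inner product), characterised by $\langle S^{*}h, u \rangle_{U_0} = \langle h, Su \rangle_H$ for $u \in U_0$. Since $\Psi = \Psi_{\mu} \circ \Psi_{\lambda}^{-1}$ we have $\Psi(w) = \Psi_{\mu}(S)$, and since $w = \Psi_{\lambda}(S)$ we have $\langle h, w \rangle_H = \langle h, \Psi_{\lambda}(S) \rangle_H$. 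The goal is therefore to prove the two auxiliary identities
\begin{align*}
\langle h, \Psi_{\lambda}(S) \rangle_H = \Phi_{\lambda}(S^{*}h) \quad \text{and} \quad \langle h, \Psi_{\mu}(S) \rangle_H = \Phi_{\mu}(S^{*}h),
\end{align*}
noting that $S^{*}h \in U_0 \subset U$ lies in the domains of $\Phi_{\lambda}$ and $\Phi_{\mu}$, after which the claim follows by composition.

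First I would establish the identity for the $\lambda$-system (the $\mu$-system is verbatim the same, with $\lambda, e, \Phi_{\lambda}, \Psi_{\lambda}$ replaced by $\mu, f, \Phi_{\mu}, \Psi_{\mu}$). By definition $\Psi_{\lambda}(S) = (Se_j)_{j \in \mathbb{N}}$, so $\langle h, \Psi_{\lambda}(S) \rangle_H$ is the sequence $(\langle h, Se_j \rangle_H)_{j \in \mathbb{N}}$, which lies in $\ell^2 \subset \ell_{\lambda}^2$. On the other hand, since $\Phi_{\lambda}$ is the isometry with $\Phi_{\lambda} e_j^{(\lambda)} = g_j^{(\lambda)}$, expanding in the $U$-orthonormal basis $(e_j^{(\lambda)})_{j \in \mathbb{N}}$ gives $\Phi_{\lambda}(S^{*}h) = \sum_{j} \langle S^{*}h, e_j^{(\lambda)} \rangle_U \, g_j^{(\lambda)}$. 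The key computation is to relate the $U$-coordinate $\langle S^{*}h, e_j^{(\lambda)} \rangle_U$ to the $U_0$-coordinate $\langle S^{*}h, e_j \rangle_{U_0} = \langle h, Se_j \rangle_H$. Using $\langle u,v \rangle_{U_0} = \langle Q^{-1/2}u, Q^{-1/2}v \rangle_U$, the eigenrelation (\ref{Q-e-diagonal}) in the form $Q^{-1/2} e_j^{(\lambda)} = \lambda_j^{-1/2} e_j^{(\lambda)}$, and $e_j = \sqrt{\lambda_j}\, e_j^{(\lambda)}$, one finds $\langle S^{*}h, e_j^{(\lambda)} \rangle_U = \sqrt{\lambda_j}\, \langle h, Se_j \rangle_H$; together with $g_j^{(\lambda)} = g_j / \sqrt{\lambda_j}$ the factors $\sqrt{\lambda_j}$ cancel and the two sequences coincide.

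With both auxiliary identities in hand, the conclusion is immediate: since $\Phi = \Phi_{\mu} \circ \Phi_{\lambda}^{-1}$,
\begin{align*}
\Phi(\langle h, w \rangle_H) = \Phi\big( \Phi_{\lambda}(S^{*}h) \big) = \Phi_{\mu}(S^{*}h) = \langle h, \Psi_{\mu}(S) \rangle_H = \langle h, \Psi(w) \rangle_H.
\end{align*}

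I expect the main obstacle to be the bookkeeping in the auxiliary identity: one must carefully distinguish the inner products of $U$ and $U_0$, and feed the intrinsic element $S^{*}h \in U_0 \subset U$ into $\Phi_{\lambda}$ and $\Phi_{\mu}$ through the correct ($U$-)coordinates, so that the $\sqrt{\lambda_j}$ (resp.\ $\sqrt{\mu_k}$) scalings produced by passing between $U$ and $U_0$ exactly cancel those built into $g_j^{(\lambda)}$ (resp.\ $g_k^{(\mu)}$). A pleasant feature of this route is that orthogonality of eigenvectors belonging to distinct eigenvalues $\lambda_j \neq \mu_k$ is never invoked explicitly: each eigenbasis is treated in isolation via its own $Q^{-1/2}$-relation, and only the composition $\Phi_{\mu} \circ \Phi_{\lambda}^{-1}$ couples the two systems together.
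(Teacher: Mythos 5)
Your proof is correct, but it takes a genuinely different route from the paper's. The paper argues in coordinates: it writes out $\Phi = \Phi_{\mu} \circ \Phi_{\lambda}^{-1}$ and $\Psi = \Psi_{\mu} \circ \Psi_{\lambda}^{-1}$ explicitly and shows that both are represented by the same coupling matrix $\big( \langle e_j^{(\lambda)}, f_k^{(\mu)} \rangle_U \big)_{j,k}$. Concretely, for $v \in \ell_{\lambda}^2$ one first obtains $\Phi(v) = \big( \sum_{j} \tfrac{\sqrt{\lambda_j}}{\sqrt{\mu_k}} \langle e_j^{(\lambda)}, f_k^{(\mu)} \rangle_U \, v^j \big)_{k}$, and the factor $\sqrt{\lambda_j}/\sqrt{\mu_k}$ is removed precisely by the observation that eigenvectors of $Q$ belonging to distinct eigenvalues are orthogonal, so every nonzero entry has $\lambda_j = \mu_k$; the computation of $\Psi(w)$ then produces the same matrix via $\langle f_k, e_j \rangle_{U_0} = \langle e_j^{(\lambda)}, f_k^{(\mu)} \rangle_U$. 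You instead factor both sides through the intrinsic vector $S^{*}h \in U_0$, where $S = \Psi_{\lambda}^{-1}(w)$, proving $\langle h, \Psi_{\lambda}(S) \rangle_H = \Phi_{\lambda}(S^{*}h)$ and $\langle h, \Psi_{\mu}(S) \rangle_H = \Phi_{\mu}(S^{*}h)$; each of these involves only one eigensystem, the $\sqrt{\lambda_j}$ (resp.\ $\sqrt{\mu_k}$) coming from the $U$-versus-$U_0$ coordinate change cancelling against the one built into $g_j^{(\lambda)}$ (resp.\ $g_k^{(\mu)}$), and the lemma follows by composing. Your remark that the cross-system orthogonality $\langle e_j^{(\lambda)}, f_k^{(\mu)} \rangle_U = 0$ for $\lambda_j \neq \mu_k$ is never invoked is accurate, and it is the real structural difference: the paper couples the two bases and needs this spectral fact to fix the scalings, whereas you decouple them and exhibit both sides of the identity as coordinate pictures of the same element $S^{*}h$, which also makes the ``angle preservation'' reading of Theorem~\ref{thm-angle} transparent. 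Two small points you should write out when finalizing: (i) $S^{*}h$ indeed lies in $U_0$, because $S \in L_2(U_0,H)$ is in particular bounded from $U_0$ to $H$, so its adjoint maps $H$ into $U_0$; (ii) the scaling identity $\langle S^{*}h, e_j^{(\lambda)} \rangle_U = \sqrt{\lambda_j}\, \langle h, S e_j \rangle_H$ is cleanest if you expand $S^{*}h = \sum_{i} \langle S^{*}h, e_i \rangle_{U_0}\, e_i$ in the orthonormal basis $(e_i)_{i \in \mathbb{N}}$ of $U_0$ and use $e_i = \sqrt{\lambda_i}\, e_i^{(\lambda)}$, thereby avoiding any manipulation with the unbounded operator $Q^{-1/2}$.
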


\begin{proof}
By (\ref{Q-e-diagonal}) and (\ref{Q-f-diagonal}), the vectors $(e_j^{(\lambda)})_{j \in \mathbb{N}}$ and $(f_k^{(\mu)})_{k \in \mathbb{N}}$ are eigenvectors of $Q$ with corresponding eigenvalues $(\lambda_j)_{j \in \mathbb{N}}$ and $(\mu_k)_{k \in \mathbb{N}}$. Therefore, for $j,k \in \mathbb{N}$ with $\lambda_j \neq \mu_k$ we have $\langle e_j^{(\lambda)},f_k^{(\mu)} \rangle_U = 0$.
For each $v \in \ell_{\lambda}^2$ we obtain
\begin{align*}
\Phi(v) &= (\Phi_{\mu} \circ \Phi_{\lambda}^{-1})(v) = \Phi_{\mu} \bigg( \sum_{k=1}^{\infty} \langle \Phi_{\lambda}^{-1} v,f_k^{(\mu)} \rangle_U \, f_k^{(\mu)} \bigg) = \sum_{k=1}^{\infty} \langle \Phi_{\lambda}^{-1} v,f_k^{(\mu)} \rangle_U \, g_k^{(\mu)}
\\ &= \bigg( \frac{1}{\sqrt{\mu_k}} \langle \Phi_{\lambda}^{-1} v,f_k^{(\mu)} \rangle_U \bigg)_{k \in \mathbb{N}} = \bigg( \frac{1}{\sqrt{\mu_k}} \sum_{j=1}^{\infty} \langle \Phi_{\lambda}^{-1} v, e_j^{(\lambda)} \rangle_U \langle e_j^{(\lambda)}, f_k^{(\mu)} \rangle_U \bigg)_{k \in \mathbb{N}}
\\ &= \bigg( \frac{1}{\sqrt{\mu_k}} \sum_{j=1}^{\infty} \langle v, g_j^{(\lambda)} \rangle_{\ell_{\lambda}^2} \langle e_j^{(\lambda)}, f_k^{(\mu)} \rangle_U \bigg)_{k \in \mathbb{N}}
\\ &= \bigg( \sum_{j=1}^{\infty} \frac{\sqrt{\lambda_j}}{\sqrt{\mu_k}} \langle e_j^{(\lambda)}, f_k^{(\mu)} \rangle_U \, v^j \bigg)_{k \in \mathbb{N}} = \bigg( \sum_{j=1}^{\infty} \langle e_j^{(\lambda)}, f_k^{(\mu)} \rangle_U \, v^j \bigg)_{k \in \mathbb{N}}.
\end{align*}
Let $w \in \ell^2(H)$ be arbitrary. By (\ref{Psi-lambda}) we have
\begin{align*}
w = \Psi_{\lambda}(\Psi_{\lambda}^{-1}(w)) = \big( \Psi_{\lambda}^{-1}(w) e_j \big)_{j \in \mathbb{N}},
\end{align*}
and hence
\begin{align*}
\Psi(w) &= (\Psi_{\mu} \circ \Psi_{\lambda}^{-1})(w) = \big( \Psi_{\lambda}^{-1}(w)f_k \big)_{k \in \mathbb{N}} = \bigg( \Psi_{\lambda}^{-1}(w) \bigg( \sum_{j=1}^{\infty} \langle f_k,e_j \rangle_{U_0} e_j \bigg) \bigg)_{k \in \mathbb{N}}
\\ &= \bigg( \sum_{j=1}^{\infty} \langle f_k,e_j \rangle_{U_0} \Psi_{\lambda}^{-1}(w) e_j \bigg)_{k \in \mathbb{N}} = \bigg( \sum_{j=1}^{\infty} \langle f_k,e_j \rangle_{U_0} w^j \bigg)_{k \in \mathbb{N}} 
\\ &= \bigg( \sum_{j=1}^{\infty} \langle e_j^{(\lambda)}, f_k^{(\mu)} \rangle_U \, w^j \bigg)_{k \in \mathbb{N}}.
\end{align*}
Therefore, for all $h \in H$ and $w \in \ell^2(H)$ we obtain
\begin{align*}
\langle h,\Psi(w) \rangle_H = \bigg( \sum_{j=1}^{\infty} \langle e_j^{(\lambda)},f_k^{(\mu)} \rangle_U \langle h,w^j \rangle_H \bigg)_{k \in \mathbb{N}} = \Phi(\langle h,w \rangle_H),
\end{align*}
finishing the proof.
\end{proof}

\begin{proposition}\label{prop-well-defined}
The following statements are true:
\begin{enumerate}
\item $\Phi_{\lambda}(L)$ is a $\ell_{\lambda}^2$-valued L\'{e}vy process with covariance operator $Q_{\Phi_{\lambda}}$, and we have 
\begin{align*}
Q_{\Phi_{\lambda}} g_j^{(\lambda)} = \lambda_j g_j^{(\lambda)} \quad \text{for all $j \in \mathbb{N}$.}
\end{align*}

\item $\Phi_{\mu}(L)$ is a $\ell_{\mu}^2$-valued L\'{e}vy process with covariance operator $Q_{\Phi_{\mu}}$, and we have
\begin{align*}
Q_{\Phi_{\mu}} g_k^{(\mu)} = \mu_k g_k^{(\mu)} \quad \text{for all $k \in \mathbb{N}$.}
\end{align*}

\item For every $L_2^0(H)$-valued, predictable process $X$ with (\ref{int-cond-3}) we have
\begin{align}\label{int-well-exists}
\mathbb{E} \bigg[ \int_0^T \| \Psi_{\lambda}(X_s) \|_{\ell^2(H)}^2 ds \bigg] < \infty, \quad \mathbb{E} \bigg[ \int_0^T \| \Psi_{\mu}(X_s) \|_{\ell^2(H)}^2 ds \bigg] < \infty
\end{align}
and the identity (\ref{indentity-angle}).
\end{enumerate}
\end{proposition}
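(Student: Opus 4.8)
The plan is to assemble the statement almost entirely from results already established, with $\Phi_\lambda(L)$ playing the role of the base $\ell_\lambda^2$-valued Lévy martingale to which Theorem~\ref{thm-angle} is applied. Part (1) is precisely Lemma~\ref{lemma-transform-l2}, and part (2) follows by the identical argument with the data $(\mu_k, f_k^{(\mu)}, g_k^{(\mu)}, \Phi_\mu)$ in place of $(\lambda_j, e_j^{(\lambda)}, g_j^{(\lambda)}, \Phi_\lambda)$. For the integrability conditions in (\ref{int-well-exists}) I would use that $\Psi_\lambda$ and $\Psi_\mu$ are isometries, so that $\| \Psi_\lambda(X_s) \|_{\ell^2(H)} = \| X_s \|_{L_2^0(H)} = \| \Psi_\mu(X_s) \|_{\ell^2(H)}$ pointwise; both conditions then follow at once from (\ref{int-cond-3}).

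The identity (\ref{indentity-angle}) is the substantive part, and I would obtain it by a single invocation of Theorem~\ref{thm-angle} with the roles assigned as follows: the $\ell_\lambda^2$-valued Lévy martingale is $\Phi_\lambda(L)$, whose covariance $Q_{\Phi_\lambda}$ satisfies the required diagonalization by part (1); the isometric isomorphism $\ell_\lambda^2 \to \ell_\mu^2$ is $\Phi = \Phi_\mu \circ \Phi_\lambda^{-1}$; the isometry of $\ell^2(H)$ is $\Psi = \Psi_\mu \circ \Psi_\lambda^{-1}$; and the integrand is $\Psi_\lambda(X)$. Before applying the theorem I must check its two hypotheses. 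The compatibility relation (\ref{rel-Phi-Psi}) between $\Phi$ and $\Psi$ is exactly the content of Lemma~\ref{lemma-Phi-Psi}. The eigenvalue condition (\ref{eigenvalues-l2-mu}), which here reads $Q_\Phi\, g_k^{(\mu)} = \mu_k\, g_k^{(\mu)}$ for the transported covariance $Q_\Phi = \Phi\, Q_{\Phi_\lambda}\, \Phi^{-1}$, I would verify by the operator computation $\Phi\, Q_{\Phi_\lambda}\, \Phi^{-1} = (\Phi_\mu \Phi_\lambda^{-1})(\Phi_\lambda Q \Phi_\lambda^{-1})(\Phi_\lambda \Phi_\mu^{-1}) = \Phi_\mu Q \Phi_\mu^{-1} = Q_{\Phi_\mu}$, whereupon part (2) supplies the diagonalization. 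With both hypotheses in hand, Theorem~\ref{thm-angle} gives $\Psi_\lambda(X) \bullet \Phi_\lambda(L) = \Psi(\Psi_\lambda(X)) \bullet \Phi(\Phi_\lambda(L))$, and the right-hand side collapses to $\Psi_\mu(X) \bullet \Phi_\mu(L)$ after cancelling $\Psi_\lambda^{-1} \Psi_\lambda$ and $\Phi_\lambda^{-1} \Phi_\lambda$, which is precisely (\ref{indentity-angle}).

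The main obstacle is bookkeeping rather than analysis. Every genuine analytic difficulty has already been absorbed into Theorem~\ref{thm-angle}, Lemma~\ref{lemma-transform-l2}, and Lemma~\ref{lemma-Phi-Psi}, so the care lies entirely in correctly identifying which process and which pair of isometries occupy the slots of $L$, $\Phi$, and $\Psi$ in Theorem~\ref{thm-angle}, and in confirming that the transported covariance $\Phi\, Q_{\Phi_\lambda}\, \Phi^{-1}$ indeed coincides with $Q_{\Phi_\mu}$, so that the diagonalization hypothesis of the theorem is genuinely satisfied for the chosen orthonormal basis $(g_k^{(\mu)})_{k \in \mathbb{N}}$ of $\ell_\mu^2$.
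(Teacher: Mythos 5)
Your proof is correct and follows essentially the same route as the paper: parts (1) and (2) are obtained from Lemma~\ref{lemma-transform-l2}, the integrability conditions (\ref{int-well-exists}) from the isometry of $\Psi_{\lambda}$ and $\Psi_{\mu}$, and the identity (\ref{indentity-angle}) by a single application of Theorem~\ref{thm-angle} to $\Phi_{\lambda}(L)$ and $\Psi_{\lambda}(X)$ with $\Phi = \Phi_{\mu} \circ \Phi_{\lambda}^{-1}$ and $\Psi = \Psi_{\mu} \circ \Psi_{\lambda}^{-1}$, justified by Lemma~\ref{lemma-Phi-Psi}. Your explicit operator computation $\Phi \, Q_{\Phi_{\lambda}} \Phi^{-1} = \Phi_{\mu} Q \Phi_{\mu}^{-1} = Q_{\Phi_{\mu}}$, verifying the eigenvalue hypothesis (\ref{eigenvalues-l2-mu}) of Theorem~\ref{thm-angle}, is a detail the paper leaves implicit, and including it makes the application of that theorem fully watertight.
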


\begin{proof}
The first two statements follow from Lemma~\ref{lemma-transform-l2}.
Since $\Psi_{\lambda}$ and $\Psi_{\mu}$ are isometries, we obtain
\begin{align*}
\mathbb{E} \bigg[ \int_0^T \| \Psi_{\lambda}(X_s) \|_{\ell^2(H)}^2 ds \bigg] = \mathbb{E} \bigg[ \int_0^T \| X_s \|_{L_2^0(H)}^2 ds \bigg] = \mathbb{E} \bigg[ \int_0^T \| \Psi_{\mu}(X_s) \|_{\ell^2(H)}^2 ds \bigg],
\end{align*}
which, together with (\ref{int-cond-3}), yields (\ref{int-well-exists}).
Now, Theorem~\ref{thm-angle} applies by virtue of Lemma~\ref{lemma-Phi-Psi}, and yields
\begin{align*}
\Psi_{\lambda}(X) \bullet \Phi_{\lambda}(L) = \Psi(\Psi_{\lambda}(X)) \bullet \Phi(\Phi_{\lambda}(L)) = \Psi_{\mu}(X) \bullet \Phi_{\mu}(L),
\end{align*}
proving (\ref{indentity-angle}).
\end{proof}

\begin{definition}\label{def-Ito-step-3}
For every $L_2^0(H)$-valued process $X$ satisfying (\ref{int-cond-3})
we define the \emph{It\^{o}-Integral} $X \bullet L = (\int_0^t X_s dL_s)_{t \in [0,T]}$ by (\ref{def-integral-general}).
\end{definition}

By virtue of Proposition~\ref{prop-well-defined}, the Definition (\ref{def-integral-general}) of the It\^{o} integral neither depends on the choice of the eigenvalues $(\lambda_j)_{j \in \mathbb{N}}$ nor on the eigenvectors $(e_j^{(\lambda)})_{j \in \mathbb{N}}$.

Now, we shall prove the announced series representation of the It\^{o} integral.
According to Proposition~\ref{prop-W-beta-j}, the sequences $(M^j)_{j \in \mathbb{N}}$ and $(N^j)_{j \in \mathbb{N}}$ of real-valued processes given by
\begin{align*}
M^j := \frac{1}{\sqrt{\lambda_j}} \langle L,e_j^{(\lambda)} \rangle_U \quad \text{and} \quad N^j := \frac{1}{\sqrt{\lambda_j}} \langle \Phi_{\lambda}(L), g_j^{(\lambda)} \rangle_{\ell_{\lambda}^2}
\end{align*}
are sequences of standard L\'{e}vy processes.

\begin{proposition}\label{prop-int-as-sum}
For every $L_2^0(H)$-valued, predictable process $X$ satisfying (\ref{int-cond-3}) the process $(\xi^j)_{j \in \mathbb{N}}$ given by
\begin{align*}
\xi^j := X e_j, \quad j \in \mathbb{N}
\end{align*}
is a $\ell^2(H)$-valued, predictable process, and we have
\begin{align}\label{int-as-sum}
X \bullet L = \sum_{j \in \mathbb{N}} \xi^j \bullet M^j,
\end{align}
where the right-hand side of (\ref{int-as-sum}) converges unconditionally in $M_T^2(H)$.
\end{proposition}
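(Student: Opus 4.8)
The plan is to unwind the definitions of the two It\^{o} integrals involved and thereby reduce the series representation to the single identity $N^j = M^j$, where $N^j$ is the standard L\'{e}vy process attached to $\Phi_{\lambda}(L)$. The argument is essentially a matter of tracing through Definitions~\ref{def-Ito-step-2} and \ref{def-Ito-step-3}, together with the isometry properties of $\Psi_{\lambda}$ and $\Phi_{\lambda}$.

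First I would observe that by (\ref{Psi-lambda}) we have $\Psi_{\lambda}(X_s) = (X_s e_j)_{j \in \mathbb{N}} = (\xi_s^j)_{j \in \mathbb{N}}$ for each $s$, so the process $(\xi^j)_{j \in \mathbb{N}}$ is precisely $\Psi_{\lambda}(X)$. Since $\Psi_{\lambda} : L_2^0(H) \rightarrow \ell^2(H)$ is an isometric isomorphism and $X$ is an $L_2^0(H)$-valued predictable process, the composition $\Psi_{\lambda}(X)$ is an $\ell^2(H)$-valued predictable process: its values lie in $\ell^2(H)$ by construction, and predictability is preserved under composition with the continuous linear map $\Psi_{\lambda}$. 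This proves the first assertion. Moreover, Proposition~\ref{prop-well-defined} guarantees that $\Psi_{\lambda}(X)$ satisfies the integrability condition (\ref{int-cond-2}), so the It\^{o} integral $\Psi_{\lambda}(X) \bullet \Phi_{\lambda}(L)$ is well-defined.

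Next, by Definition~\ref{def-Ito-step-3} we have $X \bullet L = \Psi_{\lambda}(X) \bullet \Phi_{\lambda}(L)$, and applying Definition~\ref{def-Ito-step-2} to the $\ell_{\lambda}^2$-valued L\'{e}vy process $\Phi_{\lambda}(L)$ rewrites this as $\sum_{j \in \mathbb{N}} \xi^j \bullet N^j$, where $N^j := \frac{1}{\sqrt{\lambda_j}} \langle \Phi_{\lambda}(L), g_j^{(\lambda)} \rangle_{\ell_{\lambda}^2}$; Proposition~\ref{prop-int-conv-2} then supplies the unconditional convergence of this series in $M_T^2(H)$. It therefore remains to identify $N^j$ with $M^j$. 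Using that $\Phi_{\lambda}$ is an isometric isomorphism with $\Phi_{\lambda} e_j^{(\lambda)} = g_j^{(\lambda)}$ by (\ref{Phi-lambda}), I would compute
\begin{align*}
\langle \Phi_{\lambda}(L), g_j^{(\lambda)} \rangle_{\ell_{\lambda}^2} = \langle \Phi_{\lambda}(L), \Phi_{\lambda}(e_j^{(\lambda)}) \rangle_{\ell_{\lambda}^2} = \langle L, e_j^{(\lambda)} \rangle_U,
\end{align*}
which gives $N^j = \frac{1}{\sqrt{\lambda_j}} \langle L, e_j^{(\lambda)} \rangle_U = M^j$ for every $j \in \mathbb{N}$. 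Substituting this into the series from the previous step yields (\ref{int-as-sum}), with convergence inherited from that of $\sum_{j} \xi^j \bullet N^j$.

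I do not expect a substantial obstacle here, since the entire argument is a careful bookkeeping of the two layers of definition. The only two points requiring a little care are the predictability of $(\xi^j)_{j \in \mathbb{N}}$, which follows from the continuity of the isometry $\Psi_{\lambda}$, and the identification $N^j = M^j$, which rests on the fact that $\Phi_{\lambda}$ both preserves inner products and sends the eigenbasis $(e_j^{(\lambda)})$ to $(g_j^{(\lambda)})$.
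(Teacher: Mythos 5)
Your proof is correct and follows essentially the same route as the paper's: unwind Definitions~\ref{def-Ito-step-3} and \ref{def-Ito-step-2}, identify $N^j = M^j$ via the isometry $\Phi_{\lambda}$ and the relation $\Phi_{\lambda} e_j^{(\lambda)} = g_j^{(\lambda)}$, and invoke Proposition~\ref{prop-int-conv-2} for unconditional convergence. Your explicit verification that $(\xi^j)_{j \in \mathbb{N}} = \Psi_{\lambda}(X)$ is predictable and integrable is a minor elaboration of points the paper leaves implicit, not a different argument.
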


\begin{proof}
Since $\Phi_{\lambda}$ is an isometry, for each $j \in \mathbb{N}$ we obtain
\begin{align*}
M^j = \frac{1}{\sqrt{\lambda_j}} \langle L,e_j^{(\lambda)} \rangle_U = \frac{1}{\sqrt{\lambda_j}} \langle \Phi_{\lambda}(L), \Phi_{\lambda} e_j^{(\lambda)} \rangle_{\ell_{\lambda}^2} = \frac{1}{\sqrt{\lambda_j}} \langle \Phi_{\lambda}(L),g_j^{(\lambda)} \rangle_{\ell_{\lambda}^2} = N^j.
\end{align*}
Thus, by Definitions~\ref{def-Ito-step-3} and \ref{def-Ito-step-2} we obtain
\begin{align*}
X \bullet L = \Psi_{\lambda}(X) \bullet \Phi_{\lambda}(L) = \sum_{j \in \mathbb{N}} \Psi_{\lambda}(X)^j \bullet N^j = \sum_{j \in \mathbb{N}} \xi^j \bullet M^j
\end{align*}
and, by Proposition~\ref{prop-int-conv-2}, the series converges unconditionally in $M_T^2(H)$.
\end{proof}

\begin{remark}
By Remark~\ref{rem-orth} and the proof of Proposition~\ref{prop-int-as-sum}, the components of the It\^{o} integral $\sum_{j \in \mathbb{N}} \xi^j \bullet M^j$ are pairwise orthogonal elements of the Hilbert space $M_T^2(H)$.
\end{remark}

\begin{proposition}
For every $L_2^0(H)$-valued process $X$ satisfying (\ref{int-cond-3}) we have the \emph{It\^{o} isometry}
\begin{align*}
\mathbb{E} \Bigg[ \bigg\| \int_0^T X_s dL_s \bigg\|_H^2 \Bigg] = \mathbb{E} \bigg[ \int_0^T \| L_s \|_{L_2^0(H)}^2 ds \bigg].
\end{align*}
\end{proposition}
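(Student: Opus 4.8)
The plan is to reduce the claim directly to the definition of the integral together with the Itô isometry already established for $\ell^2(H)$-valued integrands in Proposition~\ref{prop-Ito-isom-step-2}. Recall that by Definition~\ref{def-Ito-step-3} and (\ref{def-integral-general}) we have $X \bullet L = \Psi_{\lambda}(X) \bullet \Phi_{\lambda}(L)$, where the right-hand side is the integral from Definition~\ref{def-Ito-step-2}; by Proposition~\ref{prop-int-as-sum} this unfolds as the unconditionally convergent series $\sum_{j \in \mathbb{N}} \Psi_{\lambda}(X)^j \bullet N^j$ against the sequence $(N^j)_{j \in \mathbb{N}}$ of standard L\'{e}vy processes. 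In particular, evaluating at the terminal time, $\int_0^T X_s \, dL_s = \sum_{j \in \mathbb{N}} \int_0^T \Psi_{\lambda}(X)_s^j \, dN_s^j$.

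First I would record the integrability condition (\ref{int-cond-2}) for the transported integrand $\Psi_{\lambda}(X)$, which is exactly (\ref{int-well-exists}) from Proposition~\ref{prop-well-defined}: since $\Psi_{\lambda}$ is an isometry, $\| \Psi_{\lambda}(X_s) \|_{\ell^2(H)} = \| X_s \|_{L_2^0(H)}$ pointwise, so the hypothesis (\ref{int-cond-3}) guarantees that $\Psi_{\lambda}(X)$ satisfies (\ref{int-cond-2}). This legitimizes the application of the earlier isometry.

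Then I would apply Proposition~\ref{prop-Ito-isom-step-2} to the process $\Psi_{\lambda}(X)$ and the sequence $(N^j)_{j \in \mathbb{N}}$, which yields
\begin{align*}
\mathbb{E} \Bigg[ \bigg\| \int_0^T X_s \, dL_s \bigg\|_H^2 \Bigg] = \mathbb{E} \Bigg[ \bigg\| \sum_{j \in \mathbb{N}} \int_0^T \Psi_{\lambda}(X)_s^j \, dN_s^j \bigg\|_H^2 \Bigg] = \mathbb{E} \bigg[ \int_0^T \| \Psi_{\lambda}(X_s) \|_{\ell^2(H)}^2 \, ds \bigg].
\end{align*}
Finally, using the isometry identity $\| \Psi_{\lambda}(X_s) \|_{\ell^2(H)} = \| X_s \|_{L_2^0(H)}$ once more converts the right-hand side into $\mathbb{E} [ \int_0^T \| X_s \|_{L_2^0(H)}^2 \, ds ]$, which is the asserted identity. (I read the $\| L_s \|_{L_2^0(H)}$ appearing on the right-hand side of the statement as a typographical slip for $\| X_s \|_{L_2^0(H)}$.)

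There is essentially no genuine obstacle here: the result is an immediate transfer of Proposition~\ref{prop-Ito-isom-step-2} through the isometric isomorphism $\Psi_{\lambda}$. The only point requiring a moment of care is the bookkeeping---making sure the transported integrand is $\Psi_{\lambda}(X)$ and that the relevant standard L\'{e}vy processes are the $N^j$ (equivalently the $M^j$, since $M^j = N^j$ as shown in the proof of Proposition~\ref{prop-int-as-sum}), so that one does not conflate the two coordinate systems.
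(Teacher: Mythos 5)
Your proposal is correct and follows essentially the same route as the paper: unwind Definition~\ref{def-Ito-step-3} to write $X \bullet L = \Psi_{\lambda}(X) \bullet \Phi_{\lambda}(L)$, apply the It\^{o} isometry of Proposition~\ref{prop-Ito-isom-step-2} to the transported integrand, and use that $\Psi_{\lambda}$ is an isometry to identify $\| \Psi_{\lambda}(X_s) \|_{\ell^2(H)}$ with $\| X_s \|_{L_2^0(H)}$. You are also right that the $\| L_s \|_{L_2^0(H)}$ in the statement is a typographical slip for $\| X_s \|_{L_2^0(H)}$, as the paper's own computation confirms.
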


\begin{proof}
By the It\^{o} isometry (Proposition~\ref{prop-Ito-isom-step-2}), and since $\Psi_{\lambda}$ is an isometry, we obtain
\begin{align*}
&\mathbb{E} \Bigg[ \bigg\| \int_0^T X_s dL_s \bigg\|_H^2 \Bigg] = \mathbb{E} \Bigg[ \bigg\| \int_0^T \Psi_{\lambda}(X_s) d\Phi_{\lambda}(L)_s \bigg\|_H^2 \Bigg] 
\\ &= \mathbb{E} \bigg[ \int_0^T \| \Psi_{\lambda}(X_s) \|_{\ell^2(H)}^2 ds \bigg] = \mathbb{E} \Bigg[ \int_0^T \| X_s \|_{L_2^0(H)}^2 ds \Bigg],
\end{align*}
completing the proof.
\end{proof}

We shall now prove that the stochastic integral, which we have defined so far, coincides with the It\^{o} integral developed in \cite{P-Z-book}. For this purpose, it suffices to consider elementary processes.
Note that for each operator $S \in L(U,H)$ the restriction $S|_{U_0}$ belongs to $L_2^0(H)$, because
\begin{align*}
\sum_{j=1}^{\infty} \| S e_j \|_{H}^2 &\leq \sum_{j=1}^{\infty} \| S \|_{L(U,H)}^2 \| e_j \|_U^2 = \| S \|_{L(U,H)}^2 \sum_{j=1}^{\infty} \| \sqrt{\lambda_j} e_j^{(\lambda)} \|_{U_0}^2 
\\ &= \| S \|_{L(U,H)}^2 \sum_{j=1}^{\infty} \lambda_j < \infty.
\end{align*}

\begin{proposition}
Let $X$ be a $L(U,H)$-valued simple process of the form
\begin{align*}
X = X_0 \mathbbm{1}_{\{0\}} + \sum_{i=1}^n X_i \mathbbm{1}_{(t_i,t_{i+1}]}
\end{align*}
with $0 = t_1 < \ldots < t_{n+1} = T$ and $\mathcal{F}_{t_i}$-measurable random variables $X_i : \Omega \rightarrow L(U,H)$ for $i = 0,\ldots,n$. Then we have
\begin{align*}
X|_{U_0} \bullet L = \sum_{i=1}^n X_i ( L^{t_{i+1}} - L^{t_i} ).
\end{align*}
\end{proposition}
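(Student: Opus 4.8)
The plan is to unfold Definition (\ref{def-integral-general}) of the It\^o integral and reduce the claim to the series representation for simple processes that has already been established. First I would note that $X|_{U_0}$ is genuinely a $L_2^0(H)$-valued simple process: by the computation preceding this statement each restriction $X_i|_{U_0}$ belongs to $L_2^0(H)$, and the $\mathcal{F}_{t_i}$-measurability of $X_i$ is inherited, so condition (\ref{int-cond-3}) holds and the integral $X|_{U_0} \bullet L$ is well-defined.

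Next I would apply Proposition~\ref{prop-int-as-sum}, which provides the series representation together with the identification $M^j = \tfrac{1}{\sqrt{\lambda_j}} \langle L, e_j^{(\lambda)} \rangle_U$ of the relevant standard L\'evy processes. Since $X$ is simple, the $\ell^2(H)$-valued process $\Psi_{\lambda}(X|_{U_0})$ is simple as well, with $i$-th value $\Psi_{\lambda}(X_i|_{U_0}) = (X_i e_j)_{j \in \mathbb{N}}$. Invoking Proposition~\ref{prop-simple-2} for this process therefore yields
\begin{align*}
X|_{U_0} \bullet L = \Psi_{\lambda}(X|_{U_0}) \bullet \Phi_{\lambda}(L) = \sum_{i=1}^n \sum_{j \in \mathbb{N}} (X_i e_j) \big( (M^j)^{t_{i+1}} - (M^j)^{t_i} \big).
\end{align*}

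I would then compute the increments explicitly. Because $(M^j)^t = \tfrac{1}{\sqrt{\lambda_j}} \langle L^t, e_j^{(\lambda)} \rangle_U$ and $e_j = \sqrt{\lambda_j}\, e_j^{(\lambda)}$, the factors $\sqrt{\lambda_j}$ cancel in each product $(X_i e_j)\big((M^j)^{t_{i+1}} - (M^j)^{t_i}\big)$, so the inner sum collapses to
\begin{align*}
\sum_{j \in \mathbb{N}} \langle L^{t_{i+1}} - L^{t_i}, e_j^{(\lambda)} \rangle_U \, X_i e_j^{(\lambda)}.
\end{align*}
The final step is to recognize this as $X_i$ applied to the orthonormal-basis expansion in $U$ of the increment $L^{t_{i+1}} - L^{t_i}$.

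The main obstacle, although a mild one, will be justifying that the bounded operator $X_i \in L(U,H)$ may be pulled out of this convergent infinite series, that is
\begin{align*}
\sum_{j \in \mathbb{N}} \langle v, e_j^{(\lambda)} \rangle_U \, X_i e_j^{(\lambda)} = X_i \bigg( \sum_{j \in \mathbb{N}} \langle v, e_j^{(\lambda)} \rangle_U \, e_j^{(\lambda)} \bigg) = X_i v, \qquad v := L^{t_{i+1}} - L^{t_i}.
\end{align*}
This follows from the continuity of $X_i$ together with the convergence in $U$ of the Fourier expansion $\sum_{j} \langle v, e_j^{(\lambda)} \rangle_U e_j^{(\lambda)} = v$, valid since $(e_j^{(\lambda)})_{j \in \mathbb{N}}$ is an orthonormal basis of $U$. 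Assembling the pieces gives $X|_{U_0} \bullet L = \sum_{i=1}^n X_i (L^{t_{i+1}} - L^{t_i})$, as claimed.
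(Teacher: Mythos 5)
Your proof is correct and follows essentially the same route as the paper's: unfold Definition (\ref{def-integral-general}), apply Proposition~\ref{prop-simple-2} to the simple $\ell^2(H)$-valued process $\Psi_{\lambda}(X|_{U_0})$, rewrite the increments of the standard L\'{e}vy processes as $\frac{1}{\sqrt{\lambda_j}}\langle L^{t_{i+1}}-L^{t_i}, e_j^{(\lambda)}\rangle_U$, and collapse the inner series by pulling the bounded operator $X_i$ out of the orthonormal-basis expansion. The only cosmetic difference is that you obtain the identification $M^j = N^j$ by citing Proposition~\ref{prop-int-as-sum}, whereas the paper redoes that one-line isometry computation with $\Phi_{\lambda}$ inline.
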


\begin{proof}
The process $\Psi_{\lambda}(X|_{U_0})$ is a $\ell^2(H)$-valued simple process having the representation
\begin{align*}
\Psi_{\lambda}(X|_{U_0}) = \Psi_{\lambda}(X_0|_{U_0}) \mathbbm{1}_{\{0\}} + \sum_{i=1}^n \Psi_{\lambda}(X_i|_{U_0}) \mathbbm{1}_{(t_i,t_{i+1}]}.
\end{align*}
Thus, by Proposition~\ref{prop-simple-2}, and since $\Phi_{\lambda}$ is an isometry, we obtain
\begin{align*}
&X|_{U_0} \bullet L = \Psi_{\lambda}(X|_{U_0}) \bullet \Phi_{\lambda}(L) = \sum_{i=1}^n \sum_{j \in \mathbb{N}} \Psi_{\lambda}(X_i|_{U_0})^j \big( (N^j)^{t_{i+1}} - (N^j)^{t_i} \big)
\\ &= \sum_{i=1}^n \sum_{j \in \mathbb{N}} X_i e_j \frac{\langle \Phi_{\lambda}(L^{t_{i+1}} - L^{t_i}), g_j^{(\lambda)} \rangle_{\ell_{\lambda}^2}}{\sqrt{\lambda_j}} 
\\ &= \sum_{i=1}^n \sum_{j \in \mathbb{N}} X_i e_j^{(\lambda)} \langle L^{t_{i+1}} - L^{t_i}, \Phi_{\lambda}^{-1} g_j^{(\lambda)} \rangle_U
= \sum_{i=1}^n \sum_{j \in \mathbb{N}} X_i e_j^{(\lambda)} \langle L^{t_{i+1}} - L^{t_i}, e_j^{(\lambda)} \rangle_U
\\ &= \sum_{i=1}^n X_i \bigg( \sum_{j \in \mathbb{N}} \langle L^{t_{i+1}} - L^{t_i}, e_j^{(\lambda)} \rangle_U \, e_j^{(\lambda)} \bigg) = \sum_{i=1}^n X_i ( L^{t_{i+1}} - L^{t_i} ),
\end{align*}
completing the proof.
\end{proof}

Therefore, and since the space of simple processes is dense in the space of all predictable processes satisfying (\ref{int-cond-3}), see, e.g. \cite[Cor.~8.17]{P-Z-book}, the It\^{o} integral (\ref{def-integral-general}) coincides with that in \cite{P-Z-book} for every $L_2^0(H)$-valued, predictable process $X$ satisfying (\ref{int-cond-3}). In particular, for a driving Wiener process, it coincides with the It\^{o} integral from
\cite{Da_Prato, Prevot-Roeckner, Atma-book}.

By a standard localization argument, we can extend the definition of the It\^{o} integral to all predictable processes $X$ satisfying
\begin{align}\label{loc-integrable}
\mathbb{P} \bigg( \int_0^T \| X_s \|_{L_2^0(H)}^2 ds < \infty \bigg) = 1 \quad \text{for all $T > 0$.}
\end{align}
Since the respective spaces of predictable and adapted, measurable processes are isomorphic (see \cite{Ruediger-Tappe}), proceeding as in the \cite[Sec.~3.2]{Ruediger-Tappe}, we can further extend the definition of the It\^{o} integral to all adapted, measurable processes $X$ satisfying (\ref{loc-integrable}).

\end{document}